\newtheorem{thm}{Theorem}[section]
\newtheorem{defn}[thm]{Definition}
\newtheorem{prop}[thm]{Proposition}
\newtheorem{lem}[thm]{Lemma}
\theoremstyle{definition}
\newtheorem{rmk}[thm]{Remark}
\def\HH{\mathbb{H}}
\def\dHH{\partial \mathbb{H}}
\def\MM{\Sigma_{1,1}}
\def\tMM{\widetilde\Sigma_{1,1}}
\def\MMm{\Sigma}
\def\ZZ{\mathbb{Z}}
\def\RR{\mathbb{R}}
\def\isomH{\text{isom}^+(\HH)}
\def \moduli{\mathcal{M}(\Sigma)}
\def \teich{\mathcal{T}(\Sigma)}
\title{On systoles and ortho spectrum rigidity}
\author[Masai]{Hidetoshi Masai}
\address{Department of Mathematics Tokyo Institute of Technology 2-12-1, Ookayama, Meguro-ku, Tokyo. 152-8551. Japan}
\email{masai at math.titech.ac.jp}
\author[McShane]{Greg McShane}
\address{Institut Fourier 100 rue des maths, BP 74, 38402 St Martin d'H\`eres cedex, France}
\email{mcshane at univ-grenoble-alpes.fr}
\begin{document}
\maketitle

\begin{abstract}
We consider the ortho spectrum of hyperbolic surfaces with totally geodesic boundary.
We show that in general the ortho spectrum does not 
determine the systolic length 
but that there are only finitely many possibilities.
As a corollary we show that,  up to isometry,
there are only finitely many 
hyperbolic structures on a surface 
that share a  given ortho spectrum.
\end{abstract}

\section{Introduction}

\subsection{Context}
Mark Kac famously asked if one could hear  the shape of a drum (c.f. \cite{Gordon}).
This has a more precise formulation  namely: 
does the spectrum of the Laplacian of a closed hyperbolic surface determine
the metric up to isometry?
Huber and Selberg (c.f. \cite{Buser}) had shown that for such surfaces
the Laplacian's spectrum is determined by the \textit{length spectrum}
which is the set of  lengths of the closed geodesics counted with multiplicities.
Examples were found of   \textit{isospectral pairs}
that is hyperbolic structures $X,Y$
 on a closed surface $\MMm$
which have the same length spectrum
but are  not isometric 
and so Kac's question was answered in the negative.
In particular, there is a very natural construction using finite covers  
due to Sunada (\cite{Sunada}, see also \cite{Buser}) which allows one to construct many such pairs of surfaces.

Kac's original question about analytic data associated to a metric (the Laplacian)
was translated to a question about metric data (the length of geodesics)
and solved, at least partially, using what is essentially algebra.
We say ``partially" because it seems reasonable to conjecture
that any pair of isospectral surfaces
are \textit{commensurable}
that is they share a common finite cover.

Motivated by Kac's question, 
we consider an analogous problem.
Basmajian \cite{Bas} defined the \textit{ortho spectrum}
of a 
hyperbolic structure $X$ on a surface $\MMm$ of finite type with non empty  (totally geodesic) boundary.
He proved a formula for the \textit{perimeter},
 which we define to be the sum of the lengths of the boundary components,
 in terms of the ortho spectrum. Subsequently, Bridgeman and Kahn \cite{Bridgeman, BK} gave a formula for the area of the surface in terms of the ortho spectrum. 
Thus two of the principal moduli of the hyperbolic structure $X$ are explicitly determined by the ortho spectrum.

It turns out that when one
replaces  length spectrum  by ortho spectrum,
the resulting analogue of Kac's question
is easier  to answer in the negative.
This is because it is far easier to construct 
the required isospectral pairs for the ortho spectrum
by   using finite covers
than for the  length spectrum.
So we set out to understand to what extent the 
ortho spectrum determines the geometry 
of the hyperbolic structure $X$
knowing that  
three important numerical invariants (moduli) of $X$
are determined  by the ortho spectrum:
\begin{itemize}
\item The total length of the boundary by Basmajian \cite{Bas}.
\item The area of the surface by Bridgeman \cite{Bridgeman}.
\item The entropy of the geodesic flow  can be determined via Poincar\'e series by J. Parker \cite{Par}.
\end{itemize}
Our point of view is much influenced by the work of Buser and his students who have studied Kac's original 
question in an attempt to give a complete answer.
In fact Buser and Semmler \cite{BS} proved that 
for a one holed torus the spectrum does determine 
a hyperbolic structure up to isometry.
Of fundamental importance in Buser's work is
the  \textit{systolic length} which does not
appear on our list of moduli 
explicitly determined by the ortho spectrum.
Recall that the systolic length 
is the length of the  shortest closed geodesic
and it is easy to see that this is the infimum of the length spectrum.
The systolic length is 
obviously a measure  of how thin a surface gets 
but the Collar Lemma tells us that a short geodesic 
lives in wide collar
so it also gives a lower bound on the diameter of a surface.

\subsection{Statement of Results}
Though the relationship between systolic length and 
length spectrum is quite straightforward
its relationship to the ortho spectrum is less obvious
and this is the main subject of this paper.
We show using abelian covers (Theorem \ref{covers}) 
that in general the ortho spectrum does not 
determine the systolic length 
but that (Theorem \ref{thm.McKean}) 
there are only finitely many possibilities 
for the the systolic length  for a given ortho spectrum.
As a corollary  we show that,  up to isometry,
there are only finitely many 
hyperbolic structures on a surface $\MMm$
that share a  given ortho spectrum.
To do this, following Wolpert \cite{Wolpert},
we use Mumford's  pre compactness criterion
for subsets of moduli space $\moduli$:
a subset $U \subset \moduli$ is pre compact,
if and only if,  the infimum of the  systole over $U$ is strictly positive.
Note here that when the surface $\Sigma$ has boundary, 
hyperbolic structures on $\Sigma$ we consider are assumed that all the components of the boundary are totally geodesic.
Moreover we define the moduli space $\moduli$ of $\Sigma$ to be the space of hyperbolic structures on $\Sigma$
 with the fixed perimeter.
Thus we must show that 
given a hyperbolic structure $X$
then there is a uniform 
lower bound on the systolic length which only 
depends on the ortho spectrum of $X$
and its topological complexity.
Here, 
by \textit{topological complexity}
we mean the number of pairs of pants in a pants decomposition
(equivalently the area of $X$).
At the outset we hoped to obtain this result 
by using properties of the Poincar\'e series.
Unfortunately, our argument 
(see Paragraph \ref{small critical} for a detailed discussion)  
only worked when  the entropy of the geodesic flow was strictly less than $\frac12$.
This is problematic since many surfaces do not satisfy 
this hypothesis 
and indeed
when a surface has cusps the entropy is at least $\frac12$.
We did however obtain some partial  results 
which we include here 
as they may be of independent interest
(see  Paragraphs  \ref{planar surfaces} and \ref{small critical}).
Notably we show using the identities and a geometric limit argument
 that for a finite area $n$-punctured hyperbolic surface with totally geodesic boundary which is topologically a punctured disc, there are only finitely many isospectral pairs.
Finally, in  Paragraph \ref{mckean} 
we give a proof 
using a more combinatorial approach 
and some hyperbolic trigonometry
which can in principle be made effective
i.e. used to give an upper bound 
on the number of isospectral pairs for a given surface.

\subsection{Questions, remarks,  further work}
For a compact surface without boundary
Buser \cite[Chapter 13]{Buser} has in fact given upper bounds  in terms of the genus $g$
for the number of hyperbolic structures on a surface $\MMm$
that share a  given length spectrum.
These bounds were exponential of the square of $g$, namely $e^{720g^{2}}$.
They have recently been much improved by Parlier \cite{Parlier} to $g^{154g}$.
Since our proof depends on a compactness argument
we do not obtain any such bounds. 
It seems an interesting question to render 
our theorem \textit{uniform}
that is to find a bound on the number 
of hyperbolic structures on a surface $\MMm$
that share a  given ortho spectrum
purely in terms of the topological complexity of $\MMm$
as defined above.

In Section \ref{sec.covers} we give examples of hyperbolic structures
on a surface $\MMm$
with the same ortho spectrum but different systolic length.
Since the systolic length is
the infimum of the length spectrum 
they have different length spectra.
At the time of writing,
all known pairs of hyperbolic structures
with the same length spectrum are
\textit{commensurable}
(it is conjectured that this is always the case)
so that, by covering theory (see Section \ref{sec.covers})
they necessarily have the same ortho spectrum.
This suggests that the ortho spectrum 
is potentially a strictly weaker invariant 
than the length spectrum:
i.e. the length spectrum determines the ortho spectrum
but not vice versa.

Finally, Bridgeman \cite{Bridgeman} has  investigated the 
ortho spectrum for an ideal polygon in the hyperbolic plane,
showing how to obtain  certain celebrated  identities 
for the Rogers' dilogarithm from it.
Together with Dumas \cite{Bridgeman-Dumas} he went on to 
study a  related problem 
namely that of the  chord length distribution for the ideal triangle.
In a forthcoming paper \cite{MMc2019} we consider the 
problem of whether the ortho spectrum 
determines an ideal polygon up to congruence.

\subsection*{Acknowledgement}
Main part of this work was done during the visit of the second named author to Tokyo in early summer of 2019.
The visit was supported by JSPS KAKENHI Grant Number 16H02145 and 19K14525.
We would like to thank Ara Basmajian, Martin Bridgeman,  Yi Huang, and Hugo Parlier for helpful conversations. 
We thank in particular 
Dragomir Saric  whose comments
helped improve exposition of the geometric limit arguments 
in  \ref{planar surfaces}.
Finally, we would also like to thank the anonymous referees for helpful comments that improved the paper greatly.
\section{Ortho geodesics}

It is convenient to define ortho geodesics 
by using the action of  a Fuchsian group on the universal cover of $\MMm$.
A \textit{Fuchsian group} $\Gamma$ is a discrete subgroup of $\isomH$.
If $\Gamma$ is torsion free then 
the  quotient of $\HH$ 
by the action of $\Gamma$ 
is a surface $\MMm = \HH/\Gamma$
and $\pi_1(\MMm) \simeq \Gamma$.
The limit set $\Lambda(\Gamma)$  of $\Gamma$ 
is the smallest closed $\Gamma$-invariant subset 
of the ideal boundary $\partial \HH$
and, provided $\Gamma$
is not virtually abelian,
this is a perfect set.
The complement $\Omega(\Gamma)$ of the limit set 
is called the \textit{regular set} if
it is a (possibly empty) $\Gamma$-invariant open set.
Further, if $\Gamma$ is finitely generated and $\MMm$ 
does not have finite area then 
$\Omega(\Gamma)$  is dense
and consists of countably many open intervals.
If $\Gamma$ contains no parabolic elements 
then the orbits of the action of $\Gamma$ on $\Omega(\Gamma)$
are in 1-1 correspondence with the ends  of $\MMm$.
Thus, we have a 
$\Gamma$-invariant decomposition of the ideal boundary
of $\HH$ as
$$\dHH = \Lambda(\Gamma)  \sqcup \Omega(\Gamma).$$
We shall denote by $\partial \Omega$
 the set of all the points $a,b$ such that  the 
intersection of the interval  $[a,b] \subset \dHH$
with the limit set   $\Lambda$ is $\{a,b\}$.

\subsection{Convex core and ortho geodesics}

Given  $\Gamma$ finitely generated and $\MMm$ 
of infinite area there is a canonical way to associate
a subsurface $C(\MMm) \subset \MMm$ 
of finite area with totally geodesic  boundary
called the \textit{convex core}.
Let $C(\Lambda)\subset \HH$ be the convex hull of the limit set,
this is a  closed, $\Gamma$-invariant subset 
whose frontier consists of countably many complete geodesics.
The quotient $C(\MMm) := C(\Lambda)/\Gamma$ 
embeds naturally into $\MMm = \HH/\Gamma$.
By construction,
$C(\Lambda)$ is the universal cover of $\MMm$ and
the embedding induces an isomorphism
between $\pi_1(\MMm) \simeq \Gamma$ and
$\pi_1(C(\MMm))$.
In particular: 
\begin{prop} \label{bdy}
The components of the regular set,
i.e. the maximal intervals  in the complement of $\Lambda$, 
are in 1-1 correspondence with lifts of the boundary
geodesics of $\MMm$.
\end{prop}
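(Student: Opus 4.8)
The plan is to realise the claimed bijection as the composite of two more elementary ones: first between the components of $\Omega(\Gamma)$ and the connected components of the frontier $\partial C(\Lambda)$, and then between $\partial C(\Lambda)$ and the preimage in $\HH$ of the boundary geodesics of $\MMm$.

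For the first bijection, start from a component $I=(a,b)$ of $\Omega(\Gamma)$. Since $\Lambda$ is closed and $I$ is a \emph{maximal} open subinterval of $\dHH\setminus\Lambda$, the endpoints $a,b$ lie in $\Lambda$, while every limit point lies in the complementary closed arc; hence $\Lambda$, and therefore the convex hull $C(\Lambda)$, is contained in the closed half-plane $\overline{H_I}$ bounded by the complete geodesic $\ell_I$ with endpoints $a,b$ that does not meet the open arc $I$. As $a,b\in\Lambda\subset\overline{C(\Lambda)}$ we also have $\ell_I\subset C(\Lambda)$, so $\ell_I$ is a component of $\partial C(\Lambda)$. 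The same reasoning shows $\overline{C(\Lambda)}\cap\dHH=\Lambda$: a point of $\dHH$ outside $\Lambda$ lies in some component $I$ of $\Omega(\Gamma)$, and a small neighbourhood of it is contained in $H_I$, hence disjoint from $\overline{C(\Lambda)}\subset\overline{H_I}^{\,c}$. Conversely, a component $g$ of $\partial C(\Lambda)$ bounds an open half-plane $H$ with $H\cap C(\Lambda)=\emptyset$; its boundary arc $J\subset\dHH$ is then disjoint from $\Lambda$ (since $\Lambda\subset\overline{C(\Lambda)}$), while the endpoints of $g$ lie in $\overline{C(\Lambda)}\cap\dHH=\Lambda$. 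Thus $J$ is an open arc disjoint from $\Lambda$ with both endpoints in $\Lambda$, i.e.\ a component of $\Omega(\Gamma)$, and $g=\ell_J$. The assignments $I\mapsto\ell_I$ and $g\mapsto J$ are mutually inverse (and manifestly $\Gamma$-equivariant).

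For the second bijection, recall from the discussion preceding the statement that the covering $p\colon\HH\to\MMm$ restricts to the universal covering $C(\Lambda)\to C(\MMm)$, a local isometry carrying $\partial C(\Lambda)$ onto $\partial C(\MMm)$. Moreover $p^{-1}(C(\MMm))=C(\Lambda)$, since $C(\Lambda)$ is $\Gamma$-invariant and $\MMm=\HH/\Gamma$. Now $\partial C(\MMm)$ is a finite disjoint union of simple closed geodesics — the boundary geodesics of $\MMm$ — and for each such geodesic $\beta$ the components of $p^{-1}(\beta)$ are exactly its lifts, each a complete geodesic lying in $\partial C(\Lambda)$. As the $\beta$ partition $\partial C(\MMm)$, the families of their lifts partition the set of components of $\partial C(\Lambda)$. (If $\Gamma$ has parabolics the convex core may additionally have cusps, but these lie at infinity and contribute nothing to $\partial C(\Lambda)$.) Composing the two bijections yields the proposition.

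The step requiring the most care is the first bijection, specifically the two dual assertions that the endpoints of a component of $\Omega(\Gamma)$ are genuine limit points and that a frontier geodesic of $C(\Lambda)$ subtends exactly one component of $\Omega(\Gamma)$ — neither more nor less. Both reduce to the identity $\overline{C(\Lambda)}\cap\dHH=\Lambda$ together with the structural facts about $\Lambda$ and $\Omega(\Gamma)$ recalled in the introduction ($\Lambda$ perfect, $\Omega(\Gamma)$ a dense countable union of open arcs); once these are in place, the rest is bookkeeping.
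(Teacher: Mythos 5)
Your argument is correct and amounts to the justification the paper leaves implicit: the proposition is stated there as an immediate consequence of the facts that the frontier of $C(\Lambda)$ consists of complete geodesics spanning the maximal complementary intervals and that $C(\Lambda)/\Gamma$ is the convex core embedded in $\MMm$, which are exactly the two bijections you spell out. The only blemish is a notational slip in your separation argument: with your earlier convention that $\overline{H_I}$ is the closed half-plane on the side of $\ell_I$ away from $I$ and containing $C(\Lambda)$, a neighbourhood of a point of $I$ lies in the complement of $\overline{H_I}$ and is disjoint from $\overline{C(\Lambda)}\subset\overline{H_I}$ (your containments $H_I$ versus $\overline{H_I}^{\,c}$ are swapped), which does not affect the substance of the proof.
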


Note that if $\gamma \subset \HH$ is a geodesic
with an endpoint in $\partial \Omega$ then
the corresponding geodesic in the surface $\MMm$
contains some boundary geodesic in its closure.
In what follows we will think of $C(\Lambda)$ 
as a generalized polygon and refer to the geodesics 
of $\partial C(\Lambda)$ as \textit{sides}.
We associate to pairs of distinct sides
an \textit{ortho geodesic} $\hat{\alpha^*} $,
which is just the unique common perpendicular joining
 the sides. 
 The image of  $\hat{\alpha^*} $ is a  geodesic arc  $\alpha^*$
which meets $\partial C(\MMm)$ perpendicularly
and this is an \textit{ortho geodesic on the surface $\MMm$}.
By definition, the lengths of $\hat{\alpha^*} $ and  $\alpha^*$
are the same and clearly the length of $\hat{\alpha^*} $
can be computed as a cross ratio of the endpoints of the
associated sides of $C(\Lambda)$.
We denote by $\mathcal{O}(\Sigma)$ the {\em ortho spectrum} of $\Sigma$, namely, the set of lengths of ortho geodesics counted with multiplicity.

\subsection{Identities for moduli}

One can compute two important numerical invariants  (moduli) 
of the hyperbolic structure from the ortho spectrum,
namely the perimeter (total length of the boundary) 
and the area.

\begin{thm}[Basmajian \cite{Bas}, Calegari \cite{Cal}]\label{thm.BK-C-boundary}
For each $n\in\mathbb{N}$, 
there exists a function $B_{n}:\mathbb{R}_{>0}\rightarrow\mathbb{R}_{>0}$ with the following property.
Let $M$ be a compact hyperbolic $n$-manifold with totally geodesic boundary.
Then $$\mathrm{Vol}(\partial M) = \sum_{\ell\in\mathcal{O}(M)}B_{n}(\ell).$$
If $\MMm$ is a surface with a single totally geodesic boundary component $\delta$. 
Then the above formula can be written explicitly by 
$$ \sum_{\alpha^*}  2 \sinh^{-1} \left( {1 \over \sinh(\ell(\alpha^*))} \right)  = \ell(\delta).$$
\end{thm}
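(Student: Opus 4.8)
The plan is to prove the explicit surface identity by Basmajian's ``shadow of a foot'' argument (the general $n$-manifold statement follows the same way, with intervals of $\dHH$ replaced by round balls in the boundary hyperplanes). Pass to the universal cover: realise $\pi_1(\Sigma)$ as a Fuchsian group $\Gamma$ with $\HH/\Gamma$ of infinite area and $\Sigma=C(\Lambda(\Gamma))/\Gamma$, fix a lift $\tilde\delta\subset\partial C(\Lambda)$ of $\delta$, and normalise the upper half-plane model so that $\tilde\delta$ is the imaginary axis and $C(\Lambda)$ lies over $(0,\infty)$. The stabiliser of $\tilde\delta$ is $\langle g\rangle$ with $g$ the primitive hyperbolic of axis $\tilde\delta$ and translation length $\ell(\delta)$, so a fundamental domain for $\langle g\rangle$ on $\tilde\delta$ has length $\ell(\delta)$. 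By Proposition~\ref{bdy} the components of $\Omega(\Gamma)\cap(0,\infty)$ are open intervals $(a_i,b_i)$ with $0<a_i<b_i<\infty$ (distinct boundary-geodesic lifts are ultraparallel, with disjoint closures in $\overline{\HH}$), and $\beta_i$, the geodesic with endpoints $a_i,b_i$, runs exactly over the lifts of $\delta$ other than $\tilde\delta$; moreover $(0,\infty)\setminus\Lambda(\Gamma)=\bigsqcup_i(a_i,b_i)$, and $\Lambda(\Gamma)$ has measure zero since $\HH/\Gamma$ has infinite area.

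The geometric heart is the \emph{shadow lemma}. The geodesic perpendicular to $\tilde\delta$ at $iy$ is the semicircle $\{|z|=y\}$, and the ray along it into $(0,\infty)$ ends at the ideal point $y$; since $\tilde\delta$, hence $iy$, lies in the half-plane bounded by $\beta_i$ that does \emph{not} contain $(a_i,b_i)$, and two geodesics meet at most once, this ray meets $\beta_i$ if and only if $y\in(a_i,b_i)$. Hence the arcs $\sigma_i:=\{iy:a_i<y<b_i\}\subset\tilde\delta$ are pairwise disjoint and, by the two facts about $\Lambda(\Gamma)$ just noted, cover $\tilde\delta$ up to a set of measure zero; they are permuted by $\langle g\rangle$, so they descend to a decomposition of $\delta$, up to measure zero, into disjoint geodesic arcs $\bar\sigma_i$ (one for each $\langle g\rangle$-orbit of the $\beta_i$) with $\sum_i\ell(\bar\sigma_i)=\ell(\delta)$.

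It remains to express $\ell(\bar\sigma_i)$ in terms of $\ell_i:=d(\tilde\delta,\beta_i)$ and to match $\ell_i$ to the ortho spectrum. The common perpendicular of the imaginary axis and the semicircle over $(a_i,b_i)$ is the semicircle $\{|z|=\sqrt{a_i b_i}\}$; parametrising it by the polar angle gives $\ell_i=-\log\tan(\theta_i/2)$ with $\cos\theta_i=2\sqrt{a_i b_i}/(a_i+b_i)$, whereas $\ell(\bar\sigma_i)=\ell(\sigma_i)=\log(b_i/a_i)$. A short manipulation yields $\cosh\big(\tfrac12\ell(\bar\sigma_i)\big)=\coth\ell_i$, hence $\sinh\big(\tfrac12\ell(\bar\sigma_i)\big)=1/\sinh\ell_i$, so $\ell(\bar\sigma_i)=2\sinh^{-1}\!\big(1/\sinh\ell_i\big)$. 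Finally, the common perpendicular segment from $\tilde\delta$ to $\beta_i$ projects to an orthogeodesic $\alpha^*$ of $\Sigma$ with $\ell(\alpha^*)=\ell_i$; conversely every orthogeodesic, together with a choice of one of the two sides of $C(\Lambda)$ it meets (equivalently, a choice of foot on $\delta$), comes from a unique $\langle g\rangle$-orbit of a $\beta_i$. So the orbits $\{[\beta_i]\}$ index precisely the sum $\sum_{\alpha^*}$ in the statement, and substituting the formula for $\ell(\bar\sigma_i)$ into $\sum_i\ell(\bar\sigma_i)=\ell(\delta)$ gives the identity; for general $n$ one replaces $\sigma_i$ by a round ball in the boundary hyperplane and $2\sinh^{-1}(1/\sinh\ell)$ by the $(n-1)$-volume of a ball of radius $\log\coth(\ell/2)=\sinh^{-1}(1/\sinh\ell)$.

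The step I expect to be the main obstacle is the bookkeeping in the last paragraph: identifying exactly which data index the shadows $\bar\sigma_i$, and thereby pinning down the constant rather than a stray factor of two --- an orthogeodesic with both feet on $\delta$ casts two shadows, so one must be precise about whether the ortho spectrum $\mathcal{O}(\Sigma)$ records such an arc once or once per foot. The remaining ingredients are standard: that $\Lambda(\Gamma)$ has measure zero, that the complementary intervals of $\Lambda(\Gamma)$ are exactly the shadows of the boundary lifts, and the trigonometric computation of $\ell_i$.
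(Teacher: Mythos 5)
The paper does not prove this theorem at all --- it is quoted from Basmajian and Calegari --- so there is no internal proof to compare with; your proposal is the standard ``shadow'' argument, and its geometric core is sound: the shadow of a boundary lift $\beta_i$ on $\tilde\delta$ is exactly the arc over the complementary interval $(a_i,b_i)$, these arcs are pairwise disjoint and cover $\tilde\delta$ up to the (null) limit set, they are permuted equivariantly by $\langle g\rangle$, and your trigonometric computation of the width, $\cosh\bigl(\tfrac12\ell(\sigma_i)\bigr)=\coth\ell_i$, hence width $=2\sinh^{-1}\bigl(1/\sinh\ell_i\bigr)$, is correct.

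The genuine gap is exactly the bookkeeping you flag at the end and then leave unresolved, and it is not a side issue: it is the constant in the identity, i.e.\ the whole content of the explicit formula. What the decomposition proves is $\ell(\delta)=\sum_{[\beta]}2\sinh^{-1}\bigl(1/\sinh d(\tilde\delta,\beta)\bigr)$, summed over $\langle g\rangle$-orbits of the other sides. As your own ``conversely'' sentence says, these orbits are in bijection with pairs (orthogeodesic, foot on $\delta$), and this map onto orthogeodesics is exactly $2$-to-$1$: the fibre over $\alpha^*$ consists of the two orbits corresponding to its two feet, and these can never coincide, because an element of $\Gamma$ interchanging $\tilde\delta$ with another lift would fix the midpoint of their common perpendicular and hence be elliptic --- the same freeness argument as in Lemma \ref{degree d covers}(1), and this is a step you must actually prove rather than assert. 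Consequently, if $\mathcal{O}(\Sigma)$ records each unoriented orthogeodesic once (which is what the paper's definition of the ortho spectrum says), the decomposition yields $\ell(\delta)=\sum_{\alpha^*}4\sinh^{-1}\bigl(1/\sinh\ell(\alpha^*)\bigr)$, not the displayed formula; the displayed formula is the identity for the sum over orthogeodesics counted once per foot (equivalently, oriented orthogeodesics), which is the convention under which Basmajian's surface identity takes this shape. Your assertion ``the orbits $\{[\beta_i]\}$ index precisely the sum $\sum_{\alpha^*}$ in the statement'' silently assumes that per-foot convention, while your closing paragraph admits you have not decided which convention is in force. To complete the proof you must (i) establish the exact $2$-to-$1$ correspondence via the elliptic-midpoint argument above, and (ii) state and use the counting convention for $\mathcal{O}$ under which the identity as displayed is the one being proved; as written, the argument stops one step short of pinning down the constant.
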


\begin{thm}
[Bridgeman-Kahn \cite{BK}, Calegari \cite{Cal}]\label{thm.BK-C}
For each $n\in\mathbb{N}$, 
there exists a function $F_{n}:\mathbb{R}_{>0}\rightarrow\mathbb{R}_{>0}$ with the following property.
Let $M$ be a compact hyperbolic $n$-manifold with totally geodesic boundary.
Then $$\mathrm{Vol}(M) = \sum_{\ell\in\mathcal{O}(M)}F_{n}(\ell).$$
\end{thm}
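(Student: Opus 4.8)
The plan is to decompose the unit tangent bundle $T^1M$ according to how geodesics exit $M$, so that the contribution of each ``exit pattern'' is governed by a single orthogeodesic. First I would realize $M$ as $C(\Lambda)/\Gamma$ with $\Gamma\cong\pi_1(M)$; since $M$ is compact with totally geodesic boundary it has no cusps, so $\Gamma$ is convex cocompact and has no parabolic elements, and (assuming $\partial M\neq\emptyset$, without which the stated formula is vacuously false) the quotient $\HH^n/\Gamma$ has infinite volume. By Sullivan's work on convex cocompact groups this forces the critical exponent of $\Gamma$ to be strictly less than $n-1$, hence $\Lambda=\Lambda(\Gamma)$ has measure zero in $\partial\HH^n\cong S^{n-1}$. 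Writing $c_n=\mathrm{vol}(S^{n-1})$, the Liouville measure of $T^1M$ equals $c_n\,\mathrm{Vol}(M)$, and $T^1M=(T^1C(\Lambda))/\Gamma$ since $\Gamma$ acts freely.

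Next I would build the decomposition. Given a unit tangent vector at $q\in C(\Lambda)$, extend it to a complete geodesic $\tilde\gamma\subset\HH^n$. For Liouville-almost every such vector both endpoints of $\tilde\gamma$ lie in $\Omega(\Gamma)$ -- this uses that $\Lambda$ has measure zero -- and a short convexity argument (using $q\in C(\Lambda)$) shows that the two endpoints lie in two \emph{distinct} components $I_1\neq I_2$ of $\Omega(\Gamma)$, each of which by Proposition \ref{bdy} is bounded by a single side (boundary hyperplane lift) $\tilde\delta_1$, resp. $\tilde\delta_2$, of $C(\Lambda)$. The assignment $(q,v)\mapsto\{\tilde\delta_1,\tilde\delta_2\}$ is $\Gamma$-equivariant and defined off a measure-zero set, so it partitions $T^1C(\Lambda)$, up to measure zero, into sets $\mathcal{F}(\tilde\delta_1,\tilde\delta_2)$ -- the vectors whose geodesic runs from $I_1$ to $I_2$ -- indexed by unordered pairs of distinct sides. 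The crucial geometric point, again via convexity of $C(\Lambda)$, is that such a geodesic meets $C(\Lambda)$ in exactly the compact segment cut off between $\tilde\delta_1$ and $\tilde\delta_2$; hence $\mathcal{F}(\tilde\delta_1,\tilde\delta_2)$ is described purely in terms of the pair of totally geodesic hyperplanes $\tilde\delta_1,\tilde\delta_2$, with no further reference to $\Gamma$ or $\Lambda$. Since any two disjoint totally geodesic hyperplanes at common-perpendicular distance $\ell$ are carried to any other such pair by an isometry of $\HH^n$, the number $\mathrm{Vol}(\mathcal{F}(\tilde\delta_1,\tilde\delta_2))$ depends only on $\ell$; I would \emph{define} $F_n(\ell):=c_n^{-1}\,\mathrm{Vol}(\mathcal{F})$ for the pair at distance $\ell$.

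Finally I would assemble the identity. The $\Gamma$-orbits of unordered pairs of distinct sides correspond exactly to the orbits of orthogeodesics of $C(\Lambda)$, i.e. to the elements of $\mathcal{O}(M)$ counted with multiplicity; moreover the $\Gamma$-stabilizer of such a pair is trivial, since an element preserving $\{\tilde\delta_1,\tilde\delta_2\}$ preserves their common perpendicular geodesic and $\Gamma$ is discrete and torsion-free. Therefore
$$c_n\,\mathrm{Vol}(M)=\mathrm{Vol}(T^1M)=\sum_{\text{orbits}}\mathrm{Vol}(\mathcal{F})=c_n\sum_{\ell\in\mathcal{O}(M)}F_n(\ell),$$
which is the assertion. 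Each value $F_n(\ell)$ is positive because $\mathcal{F}$ is a nonempty open set, and finite either because the total is $c_n\,\mathrm{Vol}(M)<\infty$ or by a direct estimate of the integral over the space of chords (the relevant chord lengths grow only logarithmically as the defining geodesic degenerates, which is integrable). Evaluating $F_n$ explicitly as such an integral is then an optional extra step; for $n=2$ one recovers the dilogarithm-type integrand appearing in Bridgeman's work.

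The main obstacle I anticipate lies in the two soft geometric claims of the middle paragraph: that Liouville-almost every geodesic through $C(\Lambda)$ exits through exactly two well-defined sides, and that its portion inside $C(\Lambda)$ is intrinsic to those two sides -- once these are in place the rest (equivariance, counting orbits, isometry-invariance of Liouville measure) is formal. A secondary point, relevant only when $n\ge 3$, is to check that distinct boundary hyperplanes of the compact core are genuinely ultraparallel rather than merely disjoint, so that their common perpendicular -- and hence $\ell$ -- is well defined; this is exactly where compactness of $M$ is used.
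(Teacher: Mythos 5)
This statement is quoted from the literature: the paper gives no proof of Theorem \ref{thm.BK-C}, it simply cites Bridgeman--Kahn \cite{BK} and Calegari \cite{Cal}. Your proposal is, in essence, a correct reconstruction of the Bridgeman--Kahn argument: decompose $T^1M=T^1C(\Lambda)/\Gamma$ according to the (unordered) pair of boundary hyperplanes through which the orbit geodesic exits, observe that almost every vector is so classified because $\Lambda$ has measure zero, that the piece attached to a pair of hyperplanes depends only on their distance, and that $\Gamma$-orbits of pairs with trivial stabilisers correspond to orthogeodesics counted with multiplicity. Calegari's proof, by contrast, uses a different decomposition (``chimneys'' erected over the boundary rather than a partition of all of $T^1M$), which is why the paper's Remark after the theorem notes that the two constructions could a priori produce different functions $F_n$, shown to coincide in \cite{MMc}. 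Two small points deserve tightening. First, your stabiliser argument should be pushed one step further: an element preserving the pair $\{\tilde\delta_1,\tilde\delta_2\}$ preserves the feet of their common perpendicular, hence fixes its midpoint (or a foot), and only then does discreteness plus torsion-freeness force it to be the identity --- a hyperbolic translation can preserve a geodesic without fixed points, so ``preserves the common perpendicular'' alone is not enough; this is exactly the argument of Lemma \ref{degree d covers}(1) in the paper. Second, your embedding argument shows $F_n(\ell)<\infty$ only for $\ell$ lying in the ortho spectrum of the given manifold; to get a genuine function $F_n:\mathbb{R}_{>0}\rightarrow\mathbb{R}_{>0}$ one needs the direct estimate (or the explicit chord integral of \cite{BK}) that you mention only parenthetically. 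Neither point affects the identity itself, and the overall structure of your argument is sound.
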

\begin{rmk}\label{rmk.cusped-version}
The proofs in \cite{BK} and \cite{Cal} only use the fact that generic geodesics eventually meet the boundary.
Hence their proof work not only for compact case but also the case where $M$ has cusps.
We use the cusped version later in Section \ref{planar surfaces}.
\end{rmk}
\begin{rmk}
Bridgeman-Kahn and Calegari gave different proofs of Theorem \ref{thm.BK-C}.
Given that the construction of Bridgeman-Kahn and of Calegari  are  different, 
so could be the functions $F_{n}$, 
but in \cite{MMc} we show that these functions are the same.
\end{rmk}

\section{Geometry of pairs of pants}

This section contains material necessary in the proof of  Theorem \ref{thm.McKean} below in particular certain relations between 
lengths of ortho geodesics in a pair of pants and the boundary lengths.

We begin by recalling a useful formula from hyperbolic trigonometry  (see for example \cite[Equation 7.18.2]{Beardon}) .
Choose  a pair of adjacent sides
in a  right angled pentagon
 and let  $a,b$ denote their respective lengths (see Figure \ref{fig.RApenta})
 then 
\begin{equation}
\label{eq.pentagon}\cosh d = \sinh a\sinh b.
\end{equation}
where $d$ is the length of the unique side not adjacent 
to either of our chosen pair of sides.
\begin{figure}[h]
\begin{center}
\includegraphics[scale=.3]{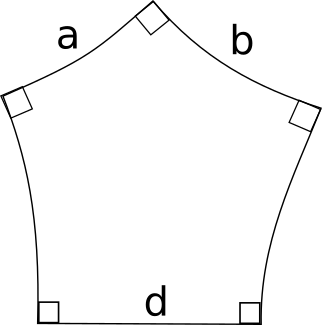} 
\end{center}
\caption{}
\label{fig.RApenta}
\end{figure}
\begin{defn}
A {\em foot} of an ortho geodesic is  a point of  intersection 
of the ortho geodesic with the
(totally geodesic)  boundary of the surface. 
Since the surface  is convex an ortho geodesic has 
exactly two feet.
\end{defn}

\begin{lem} \label{cord bounds}
Let $P$ be a pair of pants with boundary geodesics $\alpha, \beta, \gamma$.
Let $\tau$  be the unique  simple ortho geodesic
with both their feet on  $\gamma$. 
Then after possibly exchanging the labels on $\alpha,\beta$, we have
\begin{eqnarray}
\sinh(\ell(\tau)/2) &\leq& \frac{\cosh(\ell(\alpha)/2) }{\sinh(\ell(\gamma)/4)}
 \end{eqnarray}
\begin{proof}
The feet of $\tau$ divide $\gamma$ into two intervals one of which 
is of length $y \geq \ell(\gamma)/2$. 
After possibly exchanging the labels on $\alpha,\beta$ 
one may assume that  there is an embedded right angled pentagon
in $P$ with a pair of adjacent sides of length
$y/2,\, \ell(\tau)/2$ and the non adjacent side has length $\ell(\alpha)/2$.
Applying formula  (\ref{eq.pentagon}) one obtains
$$ \cosh(\ell(\alpha)/2)  
= \sinh(y/2)\sinh(\ell(\tau)/2) \geq \sinh(\ell(\gamma)/4)\sinh(\ell(\tau)/2).$$
So 
$$ \sinh(\ell(\tau)/2) \leq \frac{\cosh(\ell(\alpha)/2) }{\sinh(\ell(\gamma)/4)}
$$

\end{proof}

\end{lem}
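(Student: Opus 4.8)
The plan is to realize $\ell(\tau)/2$ as a side of an embedded right-angled pentagon whose remaining relevant sides have lengths $\ell(\alpha)/2$ and half the longer arc that $\tau$ cuts off on $\gamma$, and then to invoke the pentagon identity (\ref{eq.pentagon}). Recall that the compact pair of pants $P$ is assembled from two congruent right-angled hexagons glued along the three seams $s_{\alpha\beta},s_{\beta\gamma},s_{\gamma\alpha}$, the common perpendiculars between distinct pairs of boundary components, and that this is the quotient map for the isometric reflection $\rho$ of $P$ whose fixed-point set is exactly the union of these three seams. The side of either hexagon lying along $\gamma$ has length $\ell(\gamma)/2$, and these two sides together make up $\gamma$.

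The first step is to locate $\tau$ with respect to this decomposition. Being the unique simple ortho geodesic with both feet on $\gamma$, the arc $\tau$ is the one separating $\alpha$ from $\beta$; hence its geometric intersection number is $1$ with $s_{\alpha\beta}$ and $0$ with $s_{\beta\gamma}$ and $s_{\gamma\alpha}$, and these are realized by the geodesic representatives. Since $\rho$ fixes $\gamma$ setwise and carries simple arcs to simple arcs, $\rho(\tau)=\tau$; as $\tau$ is a geodesic not contained in a seam, it must cross $\mathrm{Fix}(\rho)$ transversally and perpendicularly, and by the intersection count it meets the seams in exactly one point $m$, lying on $s_{\alpha\beta}$, which by the symmetry is the midpoint of $\tau$. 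Thus the image $\bar\tau$ of $\tau$ in the hexagon $H=P/\rho$ is the half-arc from $m$ to the common image of the two feet; it has length $\ell(\tau)/2$, and it meets the $s_{\alpha\beta}$-side of $H$ at $m$ and the $\gamma$-side of $H$ at that foot, both perpendicularly. Cutting $H$ along $\bar\tau$ then produces two pentagons, each of which is right-angled, since the three angles inherited from $H$ and the two angles at the ends of $\bar\tau$ are all right angles.

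The second step is to read off the side lengths of the pentagon containing the $\alpha$-side. Let $y\geq\ell(\gamma)/2$ be the length of the longer of the two arcs into which the feet of $\tau$ divide $\gamma$. Because $\rho$ interchanges the two feet of $\tau$, this arc is $\rho$-invariant, so its midpoint is a fixed point of $\rho$ on $\gamma$, hence one of the feet of $s_{\gamma\alpha}$ or $s_{\gamma\beta}$ on $\gamma$. Consequently $\bar\tau$ divides the $\gamma$-side of $H$ into sub-arcs of lengths $y/2$ and $\ell(\gamma)/2-y/2$, the former running to whichever of those two seam feet is the midpoint of the $y$-arc. After possibly exchanging the labels $\alpha$ and $\beta$ we may take this to be the $s_{\gamma\alpha}$-foot, so the $y/2$ sub-arc is a side of the pentagon bounded cyclically by a piece of $s_{\alpha\beta}$, the $\alpha$-side of $H$, the seam $s_{\gamma\alpha}$, this $y/2$ sub-arc of the $\gamma$-side, and $\bar\tau$. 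In that right-angled pentagon the adjacent pair of sides $y/2$ and $\ell(\tau)/2$ has the $\alpha$-side, of length $\ell(\alpha)/2$, as its unique non-adjacent side, so (\ref{eq.pentagon}) gives $\cosh(\ell(\alpha)/2)=\sinh(y/2)\sinh(\ell(\tau)/2)$. As $y/2\geq\ell(\gamma)/4$ and $\sinh$ is increasing, this becomes $\cosh(\ell(\alpha)/2)\geq\sinh(\ell(\gamma)/4)\sinh(\ell(\tau)/2)$, which rearranges to the asserted inequality.

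I expect the main obstacle to be the topological bookkeeping of the middle step: verifying cleanly that $\tau$ crosses the seam $s_{\alpha\beta}$ perpendicularly and at its midpoint while staying disjoint from the other two seams, and then tracking which of the two sub-arcs of the $\gamma$-side has length $y/2$ so that the relabeling of $\alpha$ and $\beta$ is applied to the correct pentagon. Once the picture is in place, the conclusion is a single application of (\ref{eq.pentagon}) together with the monotonicity of $\sinh$.
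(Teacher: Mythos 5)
Your argument is correct and is essentially the paper's proof: both hinge on the same embedded right-angled pentagon with adjacent sides $y/2$ and $\ell(\tau)/2$ and opposite side $\ell(\alpha)/2$, followed by one application of Eq.~(\ref{eq.pentagon}) and monotonicity of $\sinh$. The only difference is that you spell out, via the reflection symmetry and the hexagon decomposition, why that pentagon exists, a step the paper leaves implicit.
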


\begin{lem}
\label{ortho determines alpha}
Let $P$ be a pair of pants with boundary geodesics $\alpha, \beta, \gamma$.
Let $\tau$ and  $\tau'$ be ortho geodesics with both their feet on  $\gamma$
such that $\tau$ is simple  and $\tau'$ 
goes round $\alpha$ exactly once (see Figure \ref{two pants with curve}).
Then we have
\begin{eqnarray}
 \cosh(t'/2) & = & 2\cosh(a/2)\cosh(t/2),\\
 t' & \leq & \ell(\gamma) + 2t\label{eq.t'},
\end{eqnarray}
where $a, t$ and $t'$ denote the lengths of $\alpha, \tau$ and $\tau'$ respectively.
\end{lem}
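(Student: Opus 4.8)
The plan is to treat the two conclusions by different means: the inequality $(\ref{eq.t'})$ by a soft homotopy-and-length comparison (which also pins down the precise relation between $\tau$ and $\tau'$), and the identity by a short computation in the universal cover using only the right-angled pentagon relation $(\ref{eq.pentagon})$.

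\emph{The inequality.} The two feet of $\tau$ cut $\gamma$ into two subarcs; let $c$ be the one ``on the $\alpha$-side'', i.e. the one for which $\tau\cup c$ is the frontier of the embedded collar of $\alpha$ cut off by the simple arc $\tau$ (the annular component of $P\setminus\tau$ whose core is $\alpha$). Then the based loop $\tau\cdot c$ is freely homotopic to $\alpha$, so the arc $\tau\cdot c\cdot\tau$ --- run $\tau$, return along $c$, run $\tau$ again --- is homotopic, with its endpoints free to slide on $\gamma$, to an arc in the same relative homotopy class as $\tau'$, namely the class obtained from that of $\tau$ by inserting one loop around $\alpha$ (the figure makes this transparent). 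Since an ortho geodesic minimizes length in its relative homotopy class with endpoints on the boundary, $t'=\ell(\tau')\le \ell(\tau\cdot c\cdot\tau)=2t+\ell(c)<2t+\ell(\gamma)$, which is $(\ref{eq.t'})$.

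\emph{Set-up for the identity.} Lift everything to $\HH$. Fix a lift $\tilde\gamma$ of $\gamma$ and let $A$ be the holonomy of $\alpha$, a hyperbolic isometry of translation length $a$ with axis $\tilde\alpha$, normalized so that some lift $\tilde\tau$ of $\tau$ is the common perpendicular of $\tilde\gamma$ and $A\tilde\gamma$ (this is the universal-cover picture of the collar above). Reading off the homotopy of the previous step, a lift $\tilde{\tau'}$ of $\tau'$ is then the common perpendicular of $\tilde\gamma$ and $A^2\tilde\gamma$. Put $s:=d(\tilde\gamma,\tilde\alpha)$; this is positive since $\gamma$ and $\alpha$ are disjoint closed geodesics (indeed $s$ is the length of their common perpendicular in $P$). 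I claim that for $k=1,2$ the common perpendicular $\mu_k$ of $\tilde\gamma$ and $A^k\tilde\gamma$ satisfies $\cosh(\tfrac12\ell(\mu_k))=\sinh(s)\sinh(ka/2)$. Granting this, $\ell(\mu_1)=t$ and $\ell(\mu_2)=t'$ give $\cosh(t/2)=\sinh(s)\sinh(a/2)$ and $\cosh(t'/2)=\sinh(s)\sinh(a)=2\sinh(a/2)\cosh(a/2)\sinh(s)=2\cosh(a/2)\cosh(t/2)$, which is the first equation of the lemma. To prove the claim, let $\pi$ be nearest-point projection onto $\tilde\alpha$: then $\pi(\tilde\gamma)$ is the foot on $\tilde\alpha$ of the common perpendicular $\tilde\sigma$ of $\tilde\gamma$ and $\tilde\alpha$ (of length $s$), and $\pi(A^k\tilde\gamma)=A^k\pi(\tilde\gamma)$ lies at distance $ka$ from it along $\tilde\alpha$. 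Let $\nu_k$ be the geodesic meeting $\tilde\alpha$ perpendicularly at the midpoint of $[\pi(\tilde\gamma),\pi(A^k\tilde\gamma)]$. Reflection $R$ in $\nu_k$ preserves $\tilde\alpha$ together with each of its two sides, interchanges $\pi(\tilde\gamma)$ and $\pi(A^k\tilde\gamma)$, and preserves both the distance $s$ to $\tilde\alpha$ and perpendicularity to $\tilde\alpha$; hence $R$ interchanges $\tilde\gamma$ and $A^k\tilde\gamma$. Thus $R$ preserves $\mu_k$ and reverses it, so $\mu_k$ meets $\nu_k$ perpendicularly at the midpoint of $\mu_k$; in particular $\mu_k$ is also the common perpendicular of $\tilde\gamma$ and $\nu_k$, with $d(\tilde\gamma,\nu_k)=\tfrac12\ell(\mu_k)$. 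Now $\tilde\gamma$ (between the feet of $\tilde\sigma$ and of $\mu_k$), $\tilde\sigma$, the arc of $\tilde\alpha$ of length $ka/2$ between the feet of $\tilde\sigma$ and of $\nu_k$, $\nu_k$, and $\mu_k$ bound a right-angled pentagon, embedded because $\tilde\gamma$ and $A^k\tilde\gamma$ are distinct lifts of the simple closed geodesic $\gamma$. Applying $(\ref{eq.pentagon})$ to its pair of adjacent sides of lengths $s$ and $ka/2$, whose opposite side is the portion of $\mu_k$ of length $\tfrac12\ell(\mu_k)$, gives $\cosh(\tfrac12\ell(\mu_k))=\sinh(s)\sinh(ka/2)$, as claimed.

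\emph{The main obstacle.} The delicate point is the bookkeeping of which element of $\pi_1(P)$ is attached to each ortho geodesic --- that the simple $\tau$ corresponds to $A$ and the once-winding $\tau'$ to $A^2$ (rather than, say, $A$ versus $A^3$); for this reason I would establish $(\ref{eq.t'})$ first and use the homotopy in its proof to license the assignment feeding the identity. A lesser point needing a word is the embeddedness and non-degeneracy of the pentagon in the last step, which reduces to disjointness and non-nesting statements for $\tilde\gamma$, its translates and the bisectors $\nu_k$, all of which follow from $\gamma$ being simple.
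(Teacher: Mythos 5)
Your proposal is correct, and it reaches the two equations by a route that differs in execution from the paper's, even though the engine is the same pentagon relation (\ref{eq.pentagon}). The paper works on the surface: it cuts $P$ along $\tau$ and the three seams into four right-angled pentagons to get $\cosh(t/2)=\sinh(a/2)\sinh(b)$ with $b$ the $\alpha$--$\gamma$ seam length, and then passes to a double cover in which $\tau'$ lifts to a simple arc to read off $\cosh(t'/2)=\sinh(a)\sinh(b)$ from a pentagon with adjacent sides $a$ and $b$; dividing gives the identity, and the inequality is read off from the figure by a piecewise-geodesic comparison. You instead work in the universal cover with the holonomy $A$ of $\alpha$, and your reflection-in-the-bisector argument proves the unified formula $\cosh(\tfrac12\ell(\mu_k))=\sinh(s)\sinh(ka/2)$ for $k=1,2$, with $s$ equal to the paper's $b$; your $k=2$ pentagon is exactly the pentagon the paper sees in its double cover, so the two computations are the same picture viewed at different levels of covering. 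What your version buys is that the homotopy-class bookkeeping (that $\tau$ corresponds to $A\tilde\gamma$ and $\tau'$ to $A^2\tilde\gamma$, rather than some other power) is made explicit by lifting the concatenation $\tau\cdot c\cdot\tau$, whereas the paper delegates this to its figures; it also makes the embeddedness of the relevant pentagon a checkable consequence of disjointness of lifts of the simple geodesic $\gamma$, and your inequality argument (the same comparison as the paper's, but with the subarc $c$ of $\gamma$) even yields the slightly stronger strict bound $t'\le 2t+\ell(c)<2t+\ell(\gamma)$. The paper's version is shorter and more pictorial; yours is longer but self-contained, and the identification of $\tau'$ with the class ``$\tau$ with one loop around $\alpha$ inserted'' is exactly the reading of the statement that the paper's Figure \ref{two pants with curve} encodes, so there is no gap there.
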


\begin{figure}[h] 
\begin{center}
\includegraphics[scale=.4]{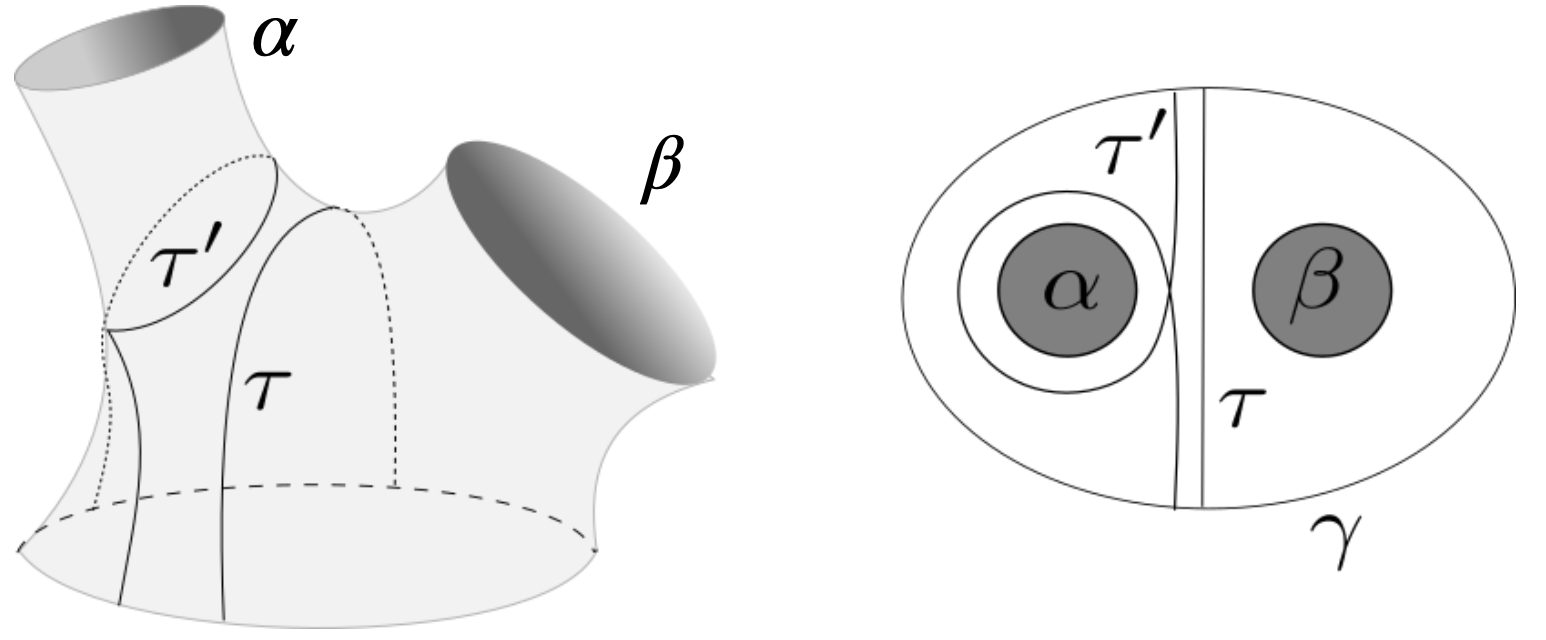} 
\end{center}
\caption{}
\label{two pants with curve}
\end{figure}
\begin{proof}
We cut $P$ along a set of curves 
to obtain a collection of  four  right angled pentagons.
So cut $P$ along $\tau$, 
and  the shortest ortho geodesics between $\alpha$ and $\gamma$, between $\beta$ and $\gamma$ ,
and $\alpha$  and $\beta$.
One of these pentagons has a pair of adjacent edges of lengths 
$a/2$ and $b$, the length of the side not adjacent to this pair
is $t/2$ (see Figure \ref{fig.estimates}).
 \begin{figure}[h]
 \label{fig.estimates}
 \begin{center}
 \includegraphics[scale=.4]{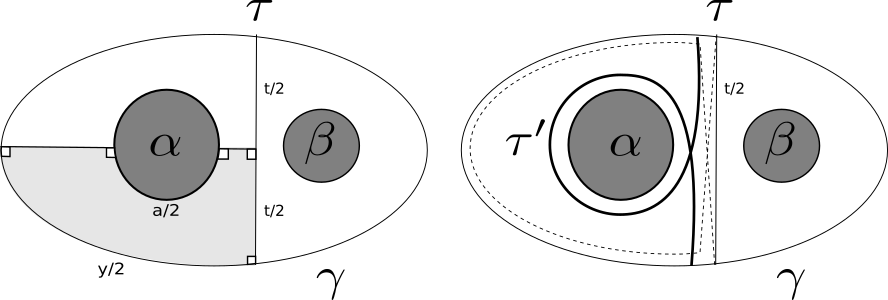} 
 \caption{}
 \label{fig.estimates}
 \end{center}
 \end{figure}
 By Eq. (\ref{eq.pentagon}), we have
\begin{equation}
\label{eq.1}\cosh(t/2) = \sinh(a/2)\sinh(b).
\end{equation}
There is a double cover of the pair of pants such that $\tau'$ lifts 
to a simple curve $\tilde{\tau'}$ 
and in this surface there is an embedded right angled pentagon
with a pair of adjacent edges of lengths $a$ and $b$
(see Figure \ref{fig.double-cover}).
The length of the non adjacent edge is $t'/2$.

\begin{equation}
\label{eq.2}\cosh(t'/2) = \sinh(a)\sinh(b).
\end{equation}
By Eq. (\ref{eq.1}) and Eq. (\ref{eq.2}), we have
\begin{align*}
\cosh(t'/2) &= \sinh(a)\frac{\cosh(t/2)}{\sinh(a/2)}\\
                     &= 2\cosh(a/2)\cosh(t/2).
\end{align*}
Eq. (\ref{eq.t'}) follows immediately from Figure \ref{fig.estimates}.
\begin{figure}[h]
\begin{center}
\includegraphics[scale=.4]{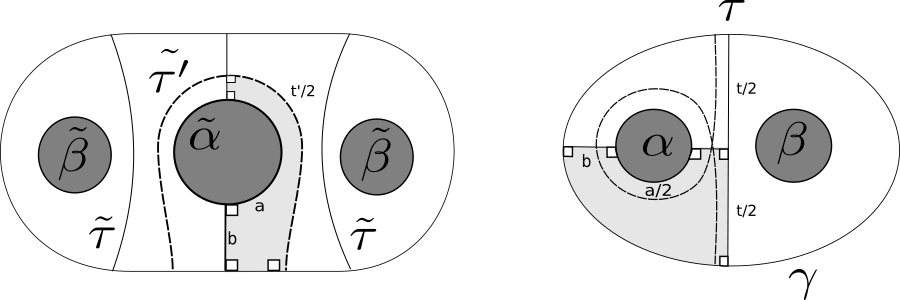} 
\end{center}
\caption{}
\label{fig.double-cover}
\end{figure}
\end{proof}
\section{Strong rigidity for the one holed torus}

We now prove that the  ortho spectrum of a one holed torus determines the hyperbolic structure up to isometry.
The analogous result for the length spectrum is due to 
Buser and Semmler \cite{BS}.
For more general surfaces such a strong rigidity result is not possible 
as one can construct surfaces using abelian covers which
are homeomorphic (see Section \ref{sec.covers}), have the same ortho spectrum  
but are not isometric.
Thus, in the general case we will  prove McKean-type theorem 
for the  ortho spectrum: 
that is we show that given a hyperbolic structure on the surface then there 
are only finitely many hyperbolic metrics with the same  ortho spectrum.

\begin{lem}\label{lem. shortest meets once}
Let $\MM$ be a hyperbolic one-holed torus with totally geodesic boundary.
Let $\alpha$ be the unique  simple closed geodesic disjoint
 from one of the shortest ortho geodesics $\tau$.
Then the shortest ortho geodesic that crosses $\alpha$
must  meet $\alpha$ in a single point.
\end{lem}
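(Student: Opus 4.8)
The plan is to argue by contradiction. Let $\sigma$ be a shortest ortho geodesic that crosses $\alpha$, and suppose $m:=i(\sigma,\alpha)\ge 2$. Let $p,q$ be the first and last intersection points of $\sigma$ with $\alpha$ as one traverses $\sigma$, and write $\sigma=e_{1}\cup\mu\cup e_{2}$, where $e_{1},e_{2}$ are the ``end'' subarcs joining the two feet on $\partial\MM$ to $p$ and $q$ respectively, and $\mu$ is the middle subarc from $p$ to $q$. The idea is to replace $\mu$ by the shorter of the two subarcs $\alpha_{p,q}$ of $\alpha$ joining $p$ and $q$ and pull the resulting arc $e_{1}\cup\alpha_{p,q}\cup e_{2}$ tight to a geodesic $\widehat\sigma$ meeting $\partial\MM$ perpendicularly, that is, an ortho geodesic; then it remains to check two things: (i) $\widehat\sigma$ still crosses $\alpha$, and (ii) $\ell(\widehat\sigma)<\ell(\sigma)$. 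Either of these on its own is harmless; together they contradict the choice of $\sigma$, and hence force $m=1$.

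For (i) the key input is Gauss--Bonnet: on a hyperbolic surface no two geodesic arcs cobound an immersed disc, since such a bigon would have non-positive area. Applied to $\sigma$ this shows $\sigma$ is simple; applied to subarcs of $\sigma$ together with subarcs of $\alpha$ it shows that, after cutting $\MM$ along $\alpha$ to obtain a pair of pants $P$ with boundary curves $\alpha_{1},\alpha_{2},\delta$ (two copies of $\alpha$ and $\partial\MM$), the trace $\sigma\cap P$ consists of $m+1$ pairwise disjoint \emph{essential} arcs --- two ``end'' arcs joining $\delta$ to $\alpha_{1}\cup\alpha_{2}$ and $m-1$ arcs with both feet on $\alpha_{1}\cup\alpha_{2}$. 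Essential arcs of a pair of pants fall into finitely many isotopy types, so a short inspection of the ways in which $m+1$ such arcs can be re-glued (identifying $\alpha_{1}$ with $\alpha_{2}$) into a single simple arc of $\MM$ pins $\sigma$ down combinatorially: completing $\alpha$ to a triple $\{\alpha,\beta,\gamma\}$ of simple closed geodesics on $\MM$ that pairwise intersect once, one finds that, up to relabelling and isotopy, $\sigma$ is obtained from one of $\tau_{\beta},\tau_{\gamma}$ --- the simple ortho geodesics disjoint from $\beta$ or $\gamma$, each of which crosses $\alpha$ exactly once --- by iterating a Dehn twist $m-1$ times. This combinatorial picture is exactly what is needed to see in (i) that when one pushes the retracted middle off $\alpha$ one can (and must) push it to the side that keeps $\widehat\sigma$ transverse to $\alpha$, rather than to the side on which it would collapse onto $\tau$, the ortho geodesic disjoint from $\alpha$.

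For (ii), since $\mu$ is an essential arc with both feet on $\alpha$ that does not cross $\alpha$, it cannot remain inside the standard collar of $\alpha$ of half-width $w$ with $\sinh w=1/\sinh(\ell(\alpha)/2)$ --- an arc of an annulus with both endpoints on a single boundary circle is inessential --- so $\mu$ leaves that collar and returns, and more precisely the combinatorial type of $\mu$ in $P$ forces $\mu$ to wrap an essential loop of $\MM$, yielding a lower bound for $\ell(\mu)$. On the other hand $\ell(\widehat\sigma)\le\ell(e_{1})+\ell(e_{2})+\ell(\alpha_{p,q})\le\ell(e_{1})+\ell(e_{2})+\tfrac12\ell(\alpha)$, so that $\ell(\widehat\sigma)-\ell(\sigma)\le\tfrac12\ell(\alpha)-\ell(\mu)$. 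One now bounds $\ell(e_{1}),\ell(e_{2})$ and $\ell(\mu)$ in terms of $\ell(\alpha)$ and $\ell(\delta)$ using the right-angled pentagon relation (\ref{eq.pentagon}) together with Lemmas~\ref{cord bounds} and \ref{ortho determines alpha} (applied in the pairs of pants obtained by cutting $\MM$ along the curves of the triple), and checks that the resulting inequality is strict in favour of $\widehat\sigma$.

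The main obstacle is the confluence of these two delicate points: one must control the combinatorial type of $\sigma$ precisely enough that the surgery provably keeps the curve transverse to $\alpha$ \emph{and} that the trigonometric comparison of lengths closes up with a strict inequality --- in particular in the regime where $\ell(\alpha)$ is large, where the crude estimate $\ell(\mu)\ge 2w$ is useless and one has to invoke instead the sharper lower bound for $\ell(\mu)$ coming from the essential loop it wraps. Once the combinatorics and this last estimate are in hand, the remaining computations are precisely the hyperbolic trigonometry of Section~3.
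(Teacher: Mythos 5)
Your point (ii) --- the strict length comparison --- is the decisive step, and as set up it is missing and in fact goes the wrong way in exactly the regime you flag. The surgery bound gives $\ell(\widehat\sigma)-\ell(\sigma)\le\tfrac12\ell(\alpha)-\ell(\mu)$, so you need $\ell(\mu)>\tfrac12\ell(\alpha)$. But cut $\MM$ along $\alpha$: the resulting pair of pants has two boundary curves of length $\ell(\alpha)$ and one of length $\ell(\partial\MM)$, and (e.g.\ by Eq.~(\ref{eq.distance between boundary})) the distance between the two copies of $\alpha$ tends to $0$ as $\ell(\alpha)\to\infty$ with the perimeter fixed, so a middle strand of $\sigma$ running from one copy of $\alpha$ to the other can be far shorter than $\tfrac12\ell(\alpha)$; the surgered arc is then longer, not shorter, and there is no ``essential loop that $\mu$ wraps'' available to rescue the estimate --- the promised sharper lower bound is never stated, let alone proved. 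A telling symptom is that your argument nowhere uses the hypothesis that $\tau$ is one of the \emph{shortest} ortho geodesics, whereas that is precisely the input the paper's proof exploits: lifting $b$ to the infinite cyclic cover of $\MM$ one finds an arc $c$ with $\ell(c)\le\ell(\tau)/2$, and the pentagon relation (\ref{eq.pentagon}) produces an ortho geodesic $x$ meeting $\alpha$ exactly once with $\cosh\ell(x)\le\sinh(\ell(\tau)/2)\sinh(\ell(b)/2)\le\sinh^{2}(\ell(b)/2)<\cosh\ell(b)$, hence $\ell(x)<\ell(b)$, with no case analysis on $\ell(\alpha)$. Without an input of this kind your comparison has no reason to close.

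Point (i) also rests on two assertions that do not hold as stated. The no-bigon (Gauss--Bonnet) argument shows that geodesic representatives realise minimal intersection numbers; it does not show that $\sigma$ is embedded --- ortho geodesics with self-intersections exist, and the shortest one crossing $\alpha$ is not obviously simple. Moreover the classification ``$\sigma$ is obtained from $\tau_{\beta}$ or $\tau_{\gamma}$ by iterating a Dehn twist $m-1$ times'' is claimed from ``a short inspection'' but is neither proved nor correct as a description of all (even simple) arcs meeting $\alpha$ exactly $m$ times; yet it is what you rely on to guarantee that the tightened surgered arc still crosses $\alpha$ rather than collapsing onto an arc disjoint from $\alpha$. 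Note also the tension that the length estimate forces you to use the \emph{shorter} subarc $\alpha_{p,q}$ and a particular side to push to, while keeping the result transverse to $\alpha$ may demand the other choice. Both pillars of the proposal therefore need to be rebuilt; the paper's route through the cyclic cover and a single right-angled pentagon avoids the combinatorics entirely.
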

\begin{proof}
Let $b$ be an ortho geodesic that crosses $\alpha$ more than once.
Note that $\ell(\tau)\leq\ell(b)$.
 We will show that there is an ortho geodesic $x$ 
 which crosses $\alpha$ exactly once and is shorter than $b$.
 
 Let $\tMM \rightarrow \MM$
 denote the infinite cyclic cover in Figure \ref{fig.holed-torus}.
 \begin{figure}[h]
 \begin{center}
 \includegraphics[scale=.4]{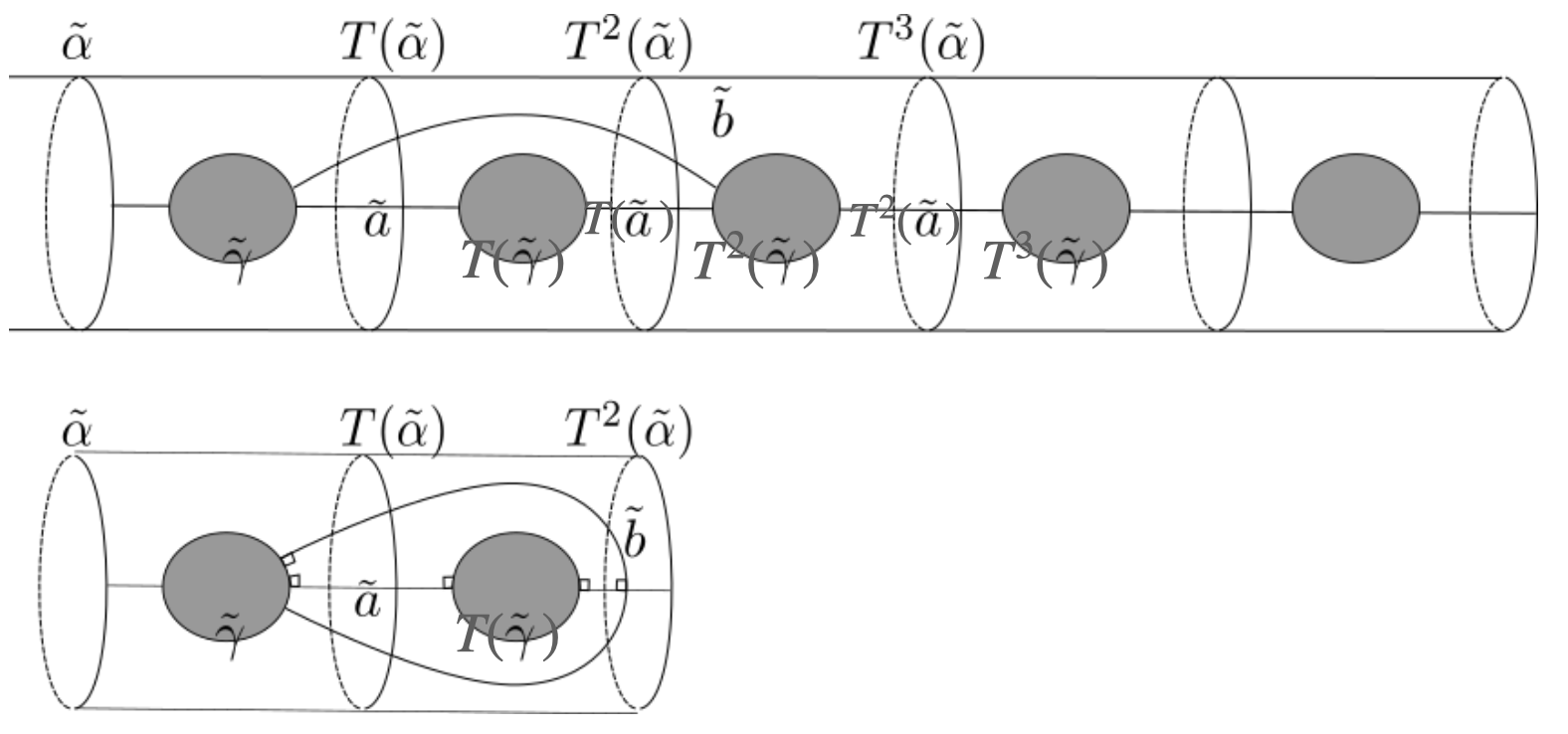} 
 \end{center}
 \caption{}
 \label{fig.holed-torus}
  \end{figure}
 The boundary geodesic lifts to a closed geodesic $\tilde{\gamma}$, and
 $\alpha$ lifts to a closed geodesic $\tilde{\alpha}$.
 If $T : \tMM  \rightarrow \tMM $
 is the  generator of the group of deck transformations
 then $\tilde{\alpha}$ and $T(\tilde{\alpha})$
 bound a fundamental domain $D$ for the action of the deck transformations.
 There is a lift $\tilde{b}$ of $b$ to $\tMM $
 which, without loss of generality exits the fundamental domain $D$
 by crossing $T(\tilde{\alpha})$.
 
 By hypothesis $b$ meets $\alpha$ more than once,
 so $\tilde b$ must meet a translate of $\tilde{\alpha}$ a second time
and this translate must be  either $T(\tilde{\alpha})$ or $T^2(\tilde{\alpha})$.
In either case there is an arc
 $c$ which  minimises the distance between
 $\tilde{b}$ and $T(\tilde{\gamma})$.

We construct  a  right angled pentagon
with a pair of  adjacent edges of length less than 
$\ell(c), \ell(b)/2$ 
and the edge disjoint  from  this pair  is  an ortho geodesic $x$ 
whose projection to $\MM$ meets $\alpha$ exactly once.
It is always possible to construct the pentagon such that 
$\ell(c) \leq  \ell(\tau)/2 $.

Now, since  $\cosh$ and $\sinh$ are monotone increasing functions on $\mathbb{R}_{\geq 0}$,
 by Eq. (\ref{eq.pentagon}),
\begin{align*}
\cosh(\ell(x)) &\leq \sinh(\ell(\tau)/2) \sinh(\ell(b)/2)\\
&\leq \sinh(\ell(b)/2)^{2}\\
&< \sinh(\ell(b)/2)^{2} + \cosh(\ell(b)/2)^{2}\\
&= \cosh(\ell(b)).
\end{align*}
Hence we have $\ell(x) < \ell(b)$.

\end{proof}

\begin{thm}
Let $T_{1},T_{2}$ be a pair of  
 hyperbolic structures 
of area $2\pi$
on the one-holed torus  such that the boundary is 
a closed geodesic.
Then $T_{1}$ and $T_{2}$ are isometric if and only if $\mathcal{O}({T_{1}}) = \mathcal{O}({T_{2}})$.
\end{thm}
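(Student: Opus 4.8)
The ``only if'' direction is immediate, since an isometry carries orthogeodesics to orthogeodesics of the same length. So suppose $\mathcal{O}:=\mathcal{O}(T_1)=\mathcal{O}(T_2)$; by Gauss--Bonnet the area is automatically $2\pi$, so the real content is that the three Fenchel--Nielsen coordinates of a convex hyperbolic structure on $\MM$ --- the boundary length, the length of a pants curve $\alpha$, and the twist along it --- can be read off from the \emph{unordered} multiset $\mathcal{O}$. First, Basmajian's identity $\ell(\delta)=\sum_{\ell\in\mathcal{O}}2\sinh^{-1}(1/\sinh\ell)$ determines $L:=\ell(\delta)$. Next, a standard surgery at a self-intersection shows that a shortest orthogeodesic $\tau$ is simple (as the hypothesis of Lemma \ref{lem. shortest meets once} presupposes); I fix one such $\tau$, observe that $\ell(\tau)=\min\mathcal{O}$, and let $\alpha$ be the unique simple closed geodesic of $\MM$ disjoint from $\tau$. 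Cutting $\MM$ along $\alpha$ gives the pair of pants $P$ with boundary lengths $L,\ell(\alpha),\ell(\alpha)$, in which $\tau$ is precisely the simple orthogeodesic with both feet on $\delta$, so Lemmas \ref{cord bounds} and \ref{ortho determines alpha} apply with $\gamma=\delta$.

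To recover $\ell(\alpha)$, I would decompose $P$ into right-angled pentagons and apply (\ref{eq.pentagon}) repeatedly to write $\ell(\tau)$ as an explicit function $g(L,a)$ of $a=\ell(\alpha)$; the estimates behind Lemma \ref{cord bounds} --- which yield $\cosh(a/2)/\sinh(L/2)\le\sinh(\ell(\tau)/2)\le\cosh(a/2)/\sinh(L/4)$ --- should upgrade to strict monotonicity of $a\mapsto g(L,a)$, so that $\ell(\alpha)$ is recovered by inverting it at $\min\mathcal{O}$. Alternatively, one can use Lemma \ref{ortho determines alpha}: the bound $\ell(\tau')\le L+2\ell(\tau)$ from (\ref{eq.t'}) together with the estimates above should confine the length $\ell(\tau')$ of the non-simple orthogeodesic appearing there to a window of $\mathcal{O}$ containing a single candidate, after which one solves $\cosh(\ell(\tau')/2)=2\cosh(\ell(\alpha)/2)\cosh(\ell(\tau)/2)$ for $\ell(\alpha)$. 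With $L$ and $\ell(\alpha)$ in hand, $P$ is known, hence so is the sub-multiset of $\mathcal{O}$ consisting of the lengths of orthogeodesics of $\MM$ disjoint from $\alpha$ --- these are exactly the orthogeodesics of $P$ with both feet on $\delta$, an explicitly computable multiset.

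To recover the twist $\theta$, I would delete that sub-multiset from $\mathcal{O}$; what remains are the lengths of orthogeodesics crossing $\alpha$, and by Lemma \ref{lem. shortest meets once} the smallest of these, $\ell(b)$, is realised by an orthogeodesic meeting $\alpha$ in a single point. Cutting $\MM$ along $\alpha$ and $b$ into right-angled polygons and applying (\ref{eq.pentagon}) as in Section~3 writes $\ell(b)$ as an explicit function $h(L,\ell(\alpha),|\theta|)$, which should be strictly increasing in $|\theta|$ on $[0,\ell(\alpha)/2]$; inverting then recovers $|\theta|$ modulo $\ell(\alpha)$. Finally, changing $\theta$ to $-\theta$ or to $\theta+\ell(\alpha)$ produces an isometric hyperbolic structure (via the mirror involution and via the Dehn twist about $\alpha$), so the triple $(L,\ell(\alpha),\theta)$ is determined up to data giving isometric surfaces, and therefore $T_1$ is isometric to $T_2$.

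The main obstacle is the two strict-monotonicity claims --- that $\ell(\tau)$ increases with $\ell(\alpha)$ for fixed $L$, and that $\ell(b)$ increases with $|\theta|$ for fixed $(L,\ell(\alpha))$ --- since the metric is reconstructed by inverting precisely these functions; here the trigonometric lemmas of Section~3 must be pushed from giving mere \emph{comparability} of lengths to genuine \emph{monotonicity}. There is also subsidiary bookkeeping: one must dispose of the non-generic configurations (a non-unique shortest orthogeodesic, or a single length realised both inside $P$ and across $\alpha$) and control the thin part $\ell(\alpha)\to 0$ where the ortho spectrum begins to accumulate; a continuity or geometric-limit argument --- or the more combinatorial method of Section~\ref{mckean} --- should handle these.
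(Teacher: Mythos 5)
Your outline is in fact the paper's own strategy, step for step: Basmajian's identity recovers the boundary length $L$; a shortest orthogeodesic is simple and its length is $\min\mathcal{O}$; the unique simple closed geodesic $\alpha$ disjoint from it is cut along to produce a pair of pants whose geometry pins down $\ell(\alpha)$; deleting the pants sub-spectrum $\mathcal{O}(P,\gamma)$ isolates the orthogeodesics crossing $\alpha$, whose shortest member crosses $\alpha$ exactly once by Lemma \ref{lem. shortest meets once}; and this pins down the twist up to sign. The trouble is that the two ``strict monotonicity'' claims you defer are precisely where the content of the proof lies, and as written they are gaps. For the recovery of $\ell(\alpha)$, your sandwich $\cosh(a/2)/\sinh(L/2)\le\sinh(t/2)\le\cosh(a/2)/\sinh(L/4)$ cannot be inverted, and no monotonicity analysis is actually needed: the pants obtained by cutting the one-holed torus along $\alpha$ has its two $\alpha$-boundary components of \emph{equal} length, so the isometric involution exchanging them preserves the unique simple orthogeodesic with both feet on the boundary and forces its feet to divide the boundary into two arcs of length $L/2$. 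The pentagon of Lemma \ref{cord bounds} then gives an exact identity (this is Eq.\ (\ref{pants formula}) in the paper) expressing $\cosh(\ell(\alpha)/2)$ as $\sinh(t/2)$ times an explicit function of $L$ alone, and $\ell(\alpha)$ is read off directly; in particular your worries about the thin part and accumulation of $\mathcal{O}$, and your fallback of locating $\ell(\tau')$ in a ``window of $\mathcal{O}$ containing a single candidate'' (which nothing justifies), are unnecessary.

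For the twist, note that your function $h$ is a minimum over infinitely many once-crossing homotopy classes, which form a single orbit under the Dehn twist about $\alpha$, so a one-off polygon computation will not give monotonicity of that minimum. The tool the paper uses is convexity of orthogeodesic length along the Fenchel--Nielsen twist together with Kerckhoff's derivative formula: for each fixed once-crossing class the length is a convex function of the twist, minimized exactly where the orthogeodesic meets $\alpha$ perpendicularly, and the length functions of the different classes are the $\ell(\alpha)$-translates of one another. Hence the single number $\min\bigl(\mathcal{O}\setminus\mathcal{O}(P,\gamma)\bigr)$ determines the twist up to sign (at most two preimages of a value of a convex function), and the two signs give isometric, mirror-image structures --- which is the conclusion you want. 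If you replace your two ``should'' statements by the symmetry/identity argument for $\ell(\alpha)$ and by convexity plus Kerckhoff for the twist, your proof becomes the paper's.
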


We begin by using  Basmajian's identity to show that boundary lengths are equal
then, as in Buser-Semmler,  we show that the structures $T_i$ determine  (essentially)  the same Fenchel-Nielsen parameters for a pair of simple geodesics $\alpha_i$. 
More precisely, 
$\ell(\alpha_1) = \ell(\alpha_2)$
and the twist parameters are the same up to change of sign.

\begin{proof}
Let $\gamma$ denote a simple loop freely homotopic to the boundary 
of the one-holed torus.
The free homotopy class of $\gamma$ is invariant under the action of the 
orientation preserving homeomorphisms of the one-holed torus.
By Basmajian \cite{Bas} $\ell(\gamma)$, 
 the length of the unique geodesic in this homotopy class,
can be determined from just the ortho spectrum 
 and so it is independent  of the choice of structure 
  $T_{i},\, i = 1,2$.
  
Let $\tau$  be a simple ortho geodesic and  denote its length by  $t$.
For $i=1,2$ there is a unique simple closed geodesic 
$\alpha_i$ on  $T_i$
disjoint from  $\tau$ and we have 
\begin{eqnarray} \label{pants formula}
\cosh(\ell(\alpha_i)/2 ) = \sinh(t/2)\sinh(\ell(\gamma)/2),
 \end{eqnarray}
To prove (\ref{pants formula}) we cut along $\alpha_i$ to obtain a pair of pants
$P_{i}$ with two boundary geodesics 
of length $\ell(\alpha_i) $ and another of length $\ell(\gamma)$.
There is an embedded right angled pentagon 
with an adjacent pair of sides of lengths $t/2, \ell(\gamma)/2$
and the non adjacent edge has length $\ell(\alpha_i)/2$
and Eq.(\ref{pants formula}) follows from Eq.(\ref{eq.pentagon}).

It follows that if $\ell(\tau_1) = \ell(\tau_2)$  ortho geodesic on $T_i$
then the closed simple geodesics $\alpha_i$ have the same length.
It remains to show that the Fenchel-Nielsen twist parameters are the same
for $\alpha_1$ and $\alpha_2$.

Let $P_{i}$ be the pair of pants we get by cutting along $\alpha_{i}$, and 
$\mathcal{O}(P_{i},\gamma)$ the ortho spectrum of $P_i$ of ortho geodesics whose feet are on $\gamma$.
Since the geometry of pairs of pants are determined by the boundary lengths, we have $\mathcal{O}(P_{1},\gamma) = \mathcal{O}(P_{2},\gamma)$.
Consider the  ``set-wise" difference 
$\mathcal{O}(T_{i})\setminus\mathcal{O}(P_{i},\gamma)$.
What we obtain by doing this is the lengths of 
those ortho geodesics which intersects $\alpha_{i}$ for $i = 1,2$.
In particular, 
we can determine $\ell(\tau^{1}_i)$ the length of the shortest such geodesic
$\tau^{1}_i$ for $i = 1,2$.
By Lemma \ref{lem. shortest meets once},
the ortho geodesic $\tau^{1}_i$ meets $\alpha_{i}$ just once.
Now any pair of curves that each meets $\alpha_i$ just once 
are related by a Dehn twist,
and since the length of an ortho geodesic is strictly convex 
along a Fenchel-Nielsen twist \cite{Ker}
there are at most two possibilities
for the choice of $\tau^{1}_i$.
We normalise the Fenchel-Nielsen twist
so that the parameter is $0$ 
when the length of $\tau^1_i$ is minimal.
Using Kerckhoff's formula \cite{Ker} for the variation
of length along a Fenchel-Nielsen twist this occurs exactly when 
$\alpha_i$ and $\tau^1_i$ meet at right angles.
It is not difficult to see that when there are two choices 
for $\tau^1_i$ this  yields a pair of  surfaces 
which are isometric 
and the corresponding twist parameters differ 
only in their sign.


\end{proof}

\section{Rigidity in general: McKean's Theorem}
Sunada \cite{Sunada} gave an ingenious geometric construction 
allowing one to construct pairs of hyperbolic surfaces $Y_1,Y_2 \in \moduli$
with the same length spectrum but which are not isometric.
On the other hand, by a theorem of McKean \cite{McKean},
 for any given surface $Y\in \moduli$ there can only be finitely
many surfaces with the same spectrum as $Y$.
In this section we prove a version of McKean's theorem for the ortho spectrum:
\begin{thm}\label{thm.McKean}
Let $\Sigma$ be a compact surface with single boundary component $\alpha_0$ and
$X$  a hyperbolic structure on $\Sigma$ with totally geodesic boundary.
Then there are finitely many choices for a hyperbolic structure $Y$ on $\Sigma$
such that $\mathcal{O}(X) = \mathcal{O}(Y)$.
\end{thm}

Whilst, as we shall see in  Section \ref{sec.covers}, 
it is much easier to find pairs of hyperbolic surfaces $Y_1,Y_2 \in \moduli$
with the same ortho spectrum but which are not isometric,
it turns out that the analogue of McKean's Theorem 
for the ortho spectrum is quite subtle.
So, before giving a proof of Theorem \ref{thm.McKean} in full generality, 
we discuss  the general strategy of the proof and how  the Bridgeman-Kahn identity and Poincar\'e series can be used to obtain the result in two special cases.

\subsection{Wolpert's strategy} \label{wolperts strategy}
We will employ a strategy due to Wolpert \cite{Wolpert}.
His argument is for the length spectrums, however we may almost identically obtain the finiteness as follows.
Since the ortho spectrum is discrete,
for any compact set  $\hat{B}$ 
of $\teich$  the Teichm\"uller space of the surface,
the set of hyperbolic structures  in $\hat{B}$ 
with the same  ortho spectrum as $Y$ is finite.
Now ortho spectrum depends only on the 
point determined by the hyperbolic surface in $\moduli$.
Thanks to Theorem \ref{thm.BK-C-boundary}, provided we have given the ortho spectrum, 
we only need to consider hyperbolic structures on $\Sigma$
 with the fixed perimeter.
So if $B$ is the projection of $\hat{B}$  to $\moduli$
we may apply Mumford's Criterion for 
pre compactness namely:
a subset $B \subset \moduli$ is pre compact,
if and only if,  the infimum of the  systole over $B$ is strictly positive.
So if we can  bound the  systole length away from zero, 
then Mumford's Compactness Theorem  implies Theorem \ref{thm.McKean}.

\subsection{Separating geodesics}\label{planar surfaces}

Recall that a simple closed curve on a surface is called {\em essential} if it is not homotopic to a point, a puncture nor a boundary component.
An essential simple closed curve is called separating if it separates the (connected) surface into two components.

We argue by contradiction,  
suppose that there is a sequence of hyperbolic structures $Y_{i}$ on $\Sigma$ with all boundary components totally geodesic so that $\mathcal{O}(Y_{i}) = \mathcal{O}(X)$, 
and that the systole length $\mathrm{sys}(Y_{i})$ converges to zero.
After passing to  a subsequence if necessary,
we may suppose there is a simple closed curve $\alpha\subset\Sigma$ 
with $\ell_{Y_{i}}(\alpha)\rightarrow 0$.
Suppose that this curve is separating 
and that one of the subsurfaces has no geodesic boundary 
component.
By Basmajian's identity the perimeter $P$  of $Y_i$ does not depend on $i$
so there is some component of the boundary,
the \textit{good boundary component},
for which the length is bounded below away from $0$.
We choose  $x_i$  a   base point  for $Y_{i}$  
on the good boundary component
and consider the limit of $(Y_{i}, x_i)$.
After passing to a subsequence if necessary 
we may suppose that this sequence converges to a surface
$(Y_{\infty}, x_\infty)$ 
with at least one cusp and which has, 
since $\alpha$ is separating,
area at most that  $Y_0$ 
minus the area of a pair of pants i.e.
$2\pi$.
Intuitively, 
there is a subsurface  which is getting further and further 
from the base point. 
More concretely, 
by the Collar Lemma there exists $m$ such that for all $i>m$ 
there is a collar round $\alpha$ on $Y_i$ of width $L$
and as a consequence the length of any ortho geodesic that meets
$\alpha$  satisfies
$$\ell_{Y_i}(\tau) > 2L.$$


 \begin{figure}[h]
  \centering
  \subfloat  
{\includegraphics[scale=.25]{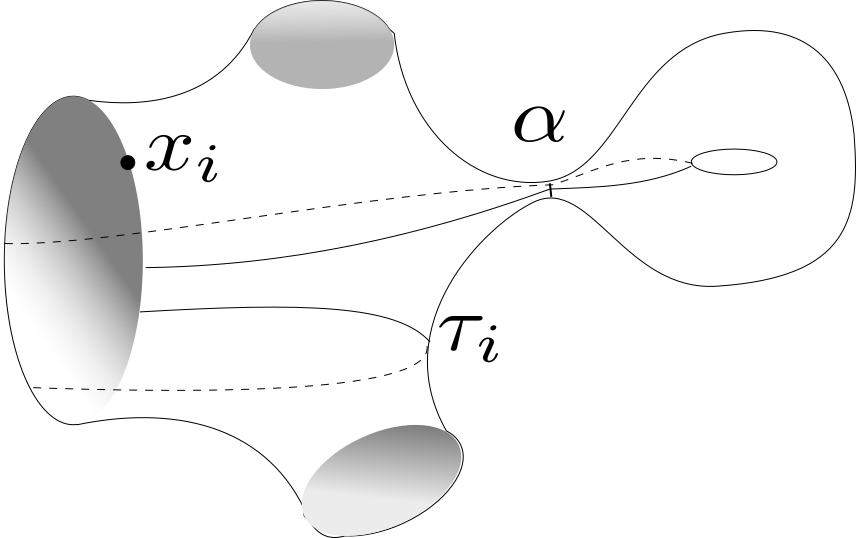} 
}
 \hfill
  \subfloat
  {\includegraphics[scale=.25]{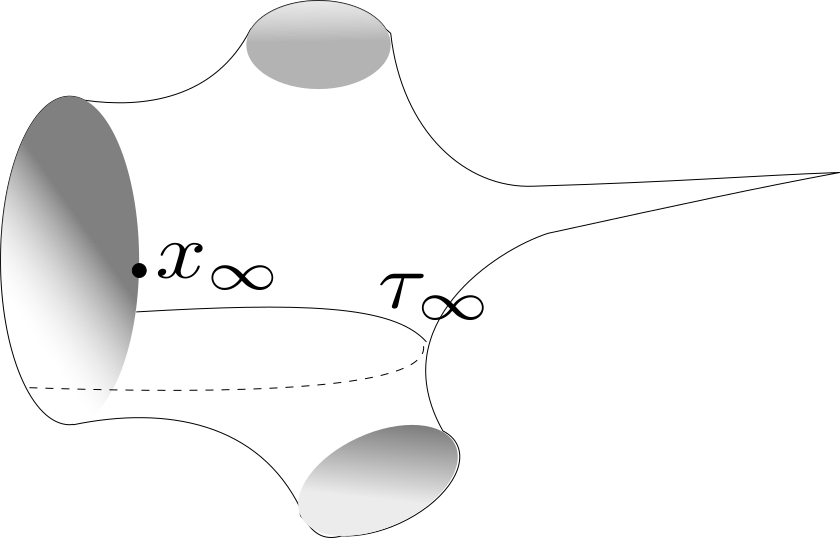} 
}
  \caption{Taking a geometric limit.
  The curve $\alpha$ has been pinched to a cusp
  in the   limit surface $Y_\infty$ on the right. 
  The length of an ortho geodesic that crosses $\alpha$ 
  goes to $\infty$. 
  The sequence of ortho geodesics $\tau_i$ limits to $\tau_\infty$.
    }

\end{figure}

Now by Theorem \ref{thm.BK-C} (and Remark \ref{rmk.cusped-version}) for each $Y_i$
$$\mathrm{Vol}(Y_i)  = 
\mathrm{Vol}(Y_0) 
= \sum_{\ell\in\mathcal{O}(Y_0)}F_{2}(\ell).$$
and so
\begin{enumerate}
\item 
since each term $F_{2}(\ell)$ is positive 
and the series is convergent,
for any $L>0$  one has 
$$\sharp \{\ell < L,\,  \ell\in\mathcal{O}(Y_i ) \} =  
\sharp \{\ell < L,\,  \ell\in\mathcal{O}(Y_0 )  \} < \infty $$
\item
 there is $L>0$ such that 
$$ \sum_{\ell<L}F_{2}(\ell) 
> \mathrm{Vol}(Y_0) - 2\pi.$$
\end{enumerate}
Let $\gamma\subset Y_i$ denote the boundary component of $Y_i$ 
containing $x_i$.
Evidently since the  perimeter $P$  of $Y_i$ does not depend on $i$
we have a uniform bound on the length of $\gamma$ namely
$$\ell_{Y_i}(\gamma) \leq P.$$
The subset of the component $Y_i \setminus \{\alpha\}$ determined by $x_i$
containing the ortho geodesics  $\tau$ of length less than  $L$ 
is contained in a subset of $Y_i$ 
for which the diameter is bounded by $L + P$.
So for $\epsilon >0 $ we have uniform bounds,
independent of $i$,
for the number of balls of radius $\epsilon$ 
needed to cover the subset of  
this  component which  contains ortho geodesics  $\tau$  of length less than  $L$.
Given such a uniform bound
it is not difficult to show that 
for each $\ell \in \mathcal{O}(Y_0 ) \cap [0,L]$
there is a sequence of  ortho geodesics
$\tau_i(\ell) \subset Y_i$  of length $\ell$
which converge to an ortho geodesic 
$\tau_\infty(\ell) \subset Y_\infty$
and that reciprocally every such ortho geodesic on 
$Y_\infty$ is obtained in this way.
Since, as we have observed above,
there are only finitely many ortho geodesics 
of length less than $L$ on $Y_i$
we can pass to a subsequence of $Y_i$
for which this convergence is simultaneous
that is, for each $\ell \in \mathcal{O}(Y_0 ) \cap [0,L]$
$$\tau_i(\ell)  \rightarrow \tau_\infty(\ell) \subset Y_\infty.$$
Now we have a contradiction as
$$\mathrm{Vol}(Y_\infty ) 
\geq \sum_{\ell<L}F_{2}(\ell) 
 > \mathrm{Vol}(Y_0) - 2\pi  
 \geq \mathrm{Vol}(Y_\infty ).$$
 In particular, for punctured discs, where every essential simple closed curve is separating and one cut piece never contains boundary, 
 the argument in this Paragraph proves Theorem \ref{thm.McKean}.

\subsection{Surfaces with small critical exponent}\label{small critical}
We next observe that under the assumption that the Hausdorff dimension is less than $1/2$,
we can prove Theorem \ref{thm.McKean}  using properties of  Poincar\'e series.
From standard calculus it is easy to see that for $x$ large
\begin{eqnarray}\label{estimate}
\sinh^{-1} \left( {1 \over \sinh(x)} \right)  = 2e^{-x} + O(e^{-3x}),
\end{eqnarray}
and, in particular, the following Poincar\'e series converges
for $h=1$:
\begin{eqnarray}\label{poincare series}
\sum_{\alpha^*}  \exp( -h\ell(\alpha^*))
\end{eqnarray}
More generally,
by work of Patterson, Sullivan and Parker the series converges for all $h$ strictly greater 
 than the critical exponent (dimension of the limit set),
 see the Appendix for an exposition of this fact.

\begin{thm}
Let $\delta > 0$ and 
$Y_n \in \moduli$ be a sequence of 
compact hyperbolic surfaces with totally geodesic boundary
such that  the Hausdorff dimension of the limit set (critical exponent)
is less than $\frac{1}{2} - \delta$.
Then the set of surfaces $Y_n$ with the same ortho spectrum as $Y$ is finite.
\end{thm}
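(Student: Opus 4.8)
The plan is to follow Wolpert's strategy exactly as set up in Paragraph \ref{wolperts strategy}: by Mumford's compactness criterion it suffices to bound the systole of $Y_k$ away from zero in terms of the ortho spectrum and the topological type. So I argue by contradiction. Suppose $\mathcal{O}(Y_k) = \mathcal{O}(Y)$ for all $k$ but $\mathrm{sys}(Y_k) \to 0$; after passing to a subsequence there is a simple closed geodesic $\alpha_k \subset Y_k$ with $\ell_{Y_k}(\alpha_k) \to 0$. The point is that a very short closed geodesic forces an ortho geodesic that is both short and numerous, contradicting the fact that the ortho spectrum (hence its counting function) is fixed.

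The key mechanism is the Collar Lemma together with the Poincar\'e series estimate. On the one hand, because $\alpha_k$ is short there is a wide embedded collar $C_k$ around $\alpha_k$ of half-width $w_k \to \infty$ (with $\sinh(w_k) \sim 1/\sinh(\ell_{Y_k}(\alpha_k)/2)$), and inside this collar one finds a definite supply of \emph{short} ortho geodesics: the perpendicular arcs in the collar running from boundary components, or more directly the short ortho geodesic $\tau_k$ between a fixed good boundary component and itself passing near $\alpha_k$ whose length can be estimated by hyperbolic trigonometry as in Lemmas \ref{cord bounds} and \ref{ortho determines alpha}. Quantitatively, using Equation (\ref{eq.pentagon}) one shows there is an ortho geodesic $\tau_k$ with $\ell_{Y_k}(\tau_k) \le 2\log(1/\ell_{Y_k}(\alpha_k)) + c$ for a constant $c$ depending only on the fixed boundary length $P$ (which is constant in $k$ by Basmajian's identity). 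On the other hand, the convergence of the Poincar\'e series (\ref{poincare series}) for exponent $h = \tfrac12 - \delta$ — valid by the hypothesis on the critical exponent together with the Patterson--Sullivan--Parker result quoted in the Appendix — gives a \emph{uniform} upper bound
\begin{equation}
\sum_{\alpha^*} \exp\!\bigl(-(\tfrac12 - \delta)\,\ell(\alpha^*)\bigr) = M < \infty,
\end{equation}
and $M$ depends only on the ortho spectrum, hence is the same for every $Y_k$. In particular the number of ortho geodesics of length at most $L$ is at most $M \exp\bigl((\tfrac12-\delta)L\bigr)$, uniformly in $k$.

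To reach the contradiction I quantify ``numerous''. The collar $C_k$ contains of order $1/\ell_{Y_k}(\alpha_k)$ disjoint fundamental-domain translates of a single arc crossing $\alpha_k$, and each gives rise to a distinct ortho geodesic meeting $\alpha_k$ whose length is at most $2\log(1/\ell_{Y_k}(\alpha_k)) + c'$; more carefully, Dehn-twisting $\tau_k$ around $\alpha_k$ produces arcs whose lengths grow linearly in the twist parameter with slope controlled by $\ell_{Y_k}(\alpha_k)$, so that setting $L_k = 2\log(1/\ell_{Y_k}(\alpha_k)) + c''$ one gets at least of order $\exp(L_k/2)/\ell_{Y_k}(\alpha_k)^{0}$ — i.e. asymptotically more than $M\exp\bigl((\tfrac12-\delta)L_k\bigr)$ since the exponent $\tfrac12$ beats $\tfrac12-\delta$ — ortho geodesics of length at most $L_k$ on $Y_k$. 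This contradicts the uniform bound from the Poincar\'e series once $k$ is large enough that $\ell_{Y_k}(\alpha_k)$ is sufficiently small. Hence $\mathrm{sys}(Y_k)$ is bounded below, the $Y_k$ lie in a precompact subset of $\moduli$, and discreteness of the ortho spectrum (Wolpert's observation) forces the set of such surfaces to be finite.

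The main obstacle is the counting estimate in the last step: one must honestly produce, inside the collar of the short curve $\alpha_k$, enough \emph{distinct} ortho geodesics of length below $L_k$ to overwhelm the polynomial-in-$e^{L_k}$ bound coming from exponent $\tfrac12-\delta$. This requires being careful that the arcs constructed by Dehn-twisting are genuinely ortho geodesics (perpendicular at both feet) and genuinely distinct, and that their length growth rate along the twist is governed by $\ell_{Y_k}(\alpha_k) \to 0$ rather than by a fixed constant — it is precisely the vanishing of this slope that lets the count beat $\exp\bigl((\tfrac12-\delta)L_k\bigr)$. The trigonometry here is a mild extension of Lemma \ref{ortho determines alpha}; the gap $\delta$ is exactly what provides the room, which is why the argument breaks down at critical exponent $\tfrac12$ and must be replaced by the combinatorial argument of Paragraph \ref{mckean} in general.
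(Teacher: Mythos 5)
Your strategy coincides with the paper's proof of this theorem: the Wolpert--Mumford reduction to a lower bound on the systole, the choice (via Basmajian's identity) of a boundary component whose length stays bounded below, the family of ortho geodesics obtained by running from that boundary to the short curve $\alpha_k$, wrapping $n$ times around it and returning, and the contradiction with convergence of the Poincar\'e series (\ref{poincare series}) at an exponent $h$ with critical exponent $<h<\tfrac12$. Your comparison of the counting function at the threshold $L_k$ with $M\exp((\tfrac12-\delta)L_k)$ is a Chebyshev-type reformulation of the paper's direct evaluation of the geometric series $\sum_{n}\exp\bigl(-h(n\ell(\alpha_k)-2\log\ell(\alpha_k)+M)\bigr)\sim C\,\ell(\alpha_k)^{2h-1}$, so there is no genuinely different idea there.

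There is, however, one genuine gap: the asserted base bound $\ell(\tau_k)\le 2\log(1/\ell_{Y_k}(\alpha_k))+c$ with $c$ depending only on the perimeter $P$. This can fail, and when it fails your final inequality fails with it. If a second curve $\beta_k$ with $\ell(\beta_k)\ll\ell(\alpha_k)$ separates $\alpha_k$ from the good boundary component, every arc from that boundary to $\alpha_k$ must cross the collar of $\beta_k$, whose width is comparable to $2\log(1/\ell(\beta_k))$; the threshold $L_k$ is then much larger than $2\log(1/\ell(\alpha_k))+O(1)$, while the number of wrapped arcs below it is still only of order $1/\ell(\alpha_k)$, which no longer beats $M\exp((\tfrac12-\delta)L_k)$ (take $\ell(\beta_k)=\ell(\alpha_k)^2$ and $\delta$ small to see the series bound win). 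The paper handles exactly this point: it fixes a Bers pants decomposition and, if the connecting arc $a$ would pass through a pair of pants with a very short boundary curve, it replaces $\alpha_k$ by a short pants curve closer to the boundary; after this replacement $a$ crosses only pants whose boundary lengths are bounded below (and above by the Bers constant), so by (\ref{eq.pentagon}) and (\ref{eq.distance between boundary}) one gets $\ell(a)\le -\log\ell(\alpha_k)+M/2$ with $M$ depending on the topological type of $\Sigma$ (not only on $P$). With that re-choice of the short curve (and the corrected dependence of the constant), your counting argument goes through and matches the paper's proof.
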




\begin{proof}
We show that the systole is bounded over the 
set of surfaces $X$ with the same ortho spectrum as $Y$ 
using the function
$$\moduli \rightarrow \RR^+,\,
 X \mapsto \sum_{\alpha^* \subset X}  \exp( -h \ell(\alpha^*)).$$
We  control the length of the systole  using the following observation:

CLAIM: if the systolic length tends to $0$ as $n \rightarrow \infty$
then there is a sequence of closed geodesics $\alpha_n$
and a  sequence of distinct ortho geodesics $\tau_k$ 
and $M > 0$ such that:
\begin{enumerate}
\item $\ell(\alpha_n) \rightarrow 0$.
\item $\ell(\tau_k) \leq k \ell(\alpha_n) -2\log(\ell(\alpha_n)) + M$.
\end{enumerate}
Assuming the claim we choose $h < 1/2$ 
and strictly greater than the critical exponent.
We have
\begin{equation}
 \sum_{\alpha^* \subset X}  \exp( -h\ell(\alpha^*))
 \geq 
  \sum_{k \geq 0}  \exp(-h\ell(\tau_k) )
\geq
  \frac{C \ell(\alpha_n)^{2h}}{1 -  \exp(- \ell(\alpha_n) )}
  \sim 
  C  \ell(\alpha_n)^{2h-1}
\end{equation}
And thus  $\ell(\alpha_n) \not \rightarrow 0$   since $2h-1 < 0$
and the Poincar\'e series converges.

To prove the claim we proceed as follows.
We begin by remarking that there is at least one boundary component
$\gamma$ say for which $\ell(\gamma) \not \rightarrow 0$.
For, if all the boundary lengths went to zero then,
by the Collar Lemma, 
the length of the shortest ortho geodesic would tend to infinity
and by hypothesis this length is independent of $Y_n$.
In fact, by Basmajian's identity, 
we may suppose that 
$$\ell(\gamma) \geq \ell(\partial \MMm)/N$$
where $N$ is the number of boundary components.


Let $\alpha_n$ be a curve 
such that $\ell(\alpha_n) \rightarrow 0$.
Choose a point on $x \in \gamma$ and construct  a piecewise geodesic curve 
by traveling along the shortest geodesic arc  $a_n$ to $\alpha_n$,
going round $\alpha_n$ $k$ times,
and then returning to $x$ by going back along the shortest route again.

The lengths of these geodesics satisfy
\begin{equation}
\ell(\tau_k)
\leq k\times  \ell(\alpha_{n}) + 2 \ell(a_n)
\end{equation}
for all $k \geq 0$.


We now show that the length $\ell(a_n)$ is bounded from above by $M/2 -\log(\ell(\alpha_n))$ where $M$ depends only on the topology of $\MMm$.
This follows from the fact that for any given $\epsilon>0$ there is a universal constant $M_\epsilon$ such that for any $X\in\moduli$, the diameter of the $\epsilon$-thick part of $X$ is bounded from above by $M_\epsilon$.
One can see this by looking at the $\epsilon/4$-neighbourhood of a diameter realizing path. The neighbourhood must be embedded and the standard hyperbolic geometry shows that the area is $\text{diameter}\times 2\sinh(\epsilon/4)$.
The hyperbolic area of $X$ is $2\pi|\chi(S)|$ and hence the diameter must be bounded.
We may suppose that there exists $\epsilon>0$ such that for any $n$,
$a_n$ travels in the $\epsilon$-thick part of $Y_n$ until it reaches the collor of $\alpha_n$.
For otherwise, there must be a sequence of simple closed curves $\alpha'_n\subset Y_n$ with length $\ell(\alpha'_n)\rightarrow 0$ as $n\rightarrow\infty$ so that $\alpha'_n$ intersects with $a_n$.
In this case we just replace $\alpha_n$ with $\alpha'_n$.
Thus we get the constant $M:=2M_\epsilon$ so that the length of $a_n$ contained in the $\epsilon$-thick part of $Y_n$ is bounded above by $M/2$.
The term $-\log(\ell(\alpha_n))$ corresponds to  the depth of the collar of $\alpha_n$.
\end{proof}

\subsection{McKean's Theorem in general} \label{mckean}

We now give a proof of Theorem \ref{thm.McKean} 
i.e. given a hyperbolic structure on the surface then there 
are only finitely many hyperbolic metrics with the same  ortho spectrum.
As before, following Wolpert \cite{Wolpert},
it suffices to show that 
the set of surfaces with the same ortho spectrum 
remains in a pre compact subset of moduli space $\moduli$.
So, again by Mumford's criterion, 
we will show that there is a strictly positive lower bound for the systole.
Our bound depends on a 
function which we will call 
the \textit{hyperbolic granulosity} 
of a discrete subset $S$ of the positive reals
$$ 
\text{granulosity}_S(L) := \inf\left\{ \frac{\cosh( y/2)}{\cosh( x/2)}~\middle|~  x,y\in S,x < y < L \right\}.$$
This is a monotone decreasing function of  $L \in \RR_+$,
bounded below by $1$
and is a measure of the smallest gap in $S$ in the interval $[0,L]$.
The hyperbolic granulosity appears naturally 
from the trigonometric identities 
we use to determine lengths of closed geodesics  
from those of  ortho geodesics.

Note that the ortho spectrum does not determine the systole's length. 
For example, in Section \ref{sec.covers} we construct pairs of surfaces with the same
ortho spectrum but different  systolic length.
Given such examples, it seems difficult to render such a result \textit{effective}
that is to give an upper bound for the number of
such hyperbolic structures which depends only on the topological type of the surface.

\begin{proof}[Proof of Theorem \ref{thm.McKean}]
Following Wolpert \cite{Wolpert}:
we suppose that there is an infinite family of pairwise non isometric
hyperbolic structures $Y_n$  with the same ortho spectrum as $X$.
We will show that there is a  strictly positive lower bound for the systole.

Let $B$ denote the Bers constant for $\Sigma$.
For each $Y_n$ choose a pants decomposition  $\mathcal{P}_n$
which satisfies the $\ell(\gamma) \leq B$ for all  
$\gamma \in \mathcal{P}_n $.
We make this choice since 
this guarantees that 
if $\ell(\alpha) \rightarrow 0$ as $n \rightarrow  \infty$
then $\alpha \in \mathcal{P}_n$.
We will bound the length $\ell(\gamma)$ from below independently of $n$.
This means that $Y_n$ remains in a compact
subset of moduli space and so must be eventually constant
since the ortho spectrum is discrete.

For each $n$ we construct a  \textit{rooted adjacency graph} $\Gamma_n$
as follows.
The  vertices of $\Gamma_n$ are the pants in $\mathcal{P}_n$,
a pair of vertices  is joined by an edge if they satisfy the obvious relation
and the root of $\Gamma_n$  
is the vertex corresponding to  the pants which meets the boundary.
Fix $n$ and choose a
rooted spanning tree $T$ for $\Gamma_n$.
The depth of the spanning tree is bounded 
above by the number of pants 
which does not depend on  $n$.
Let $P$ be a vertex of $T$ we show how to bound 
the lengths of the boundary geodesics of $P$ 
by induction on the depth in the spanning tree.
The root vertex of $\Gamma_n$ is the unique vertex of minimal depth 
and we begin by this case.

Suppose that $P$ is the root vertex
with boundary geodesics $\alpha_{0}, \alpha_1$ and $\beta_1$, where $\alpha_{0}$ corresponds to the boundary of the surface $\Sigma$.
Let  $\tau_0$ and $\tau'_0$ are as in Lemma \ref{ortho determines alpha} with endpoints on $\alpha_{0}$, 
where we suppose $\tau'_{0}$ going around $\alpha_{1}$.
Then  we  have an upper bound 
for $\ell(\tau_{0})$ namely, by combining Lemma \ref{cord bounds} and $x<\sinh x$ for $x>0$
$$\ell(\tau_{0})
\leq  2 \text{arcsinh} \left( \frac{2\exp(\ell(\alpha_1)/2)  }{ \ell(\alpha_{0} )} \right) 
\leq  2 \text{arcsinh} \left( \frac{2\exp(B/2) }{ \ell(\alpha_{0})} \right).
$$
To simplify notation we define a function
$$F(x):= 2 \text{arcsinh} \left( 2\exp(B/2)/x \right) + 2\ell(\alpha_{0}) $$ 
so that
\begin{eqnarray}
\ell(\tau_{0}) & \leq & F(\ell(\alpha_{0})) -  2\ell(\alpha_{0}) \\
\ell(\tau_{0}') & \leq & F(\ell(\alpha_{0}))
\end{eqnarray}
By Lemma \ref{ortho determines alpha}, we have 
$$ \cosh(\ell(\tau'_{0})/2) = 2\cosh(\ell(\alpha_1)/2)\cosh(\ell(\tau_{0})/2),$$
so that  $\ell(\tau_{0}') > \ell(\tau_{0})$ and moreover
$$  2\cosh( \ell(\alpha_{1})/2) 
= \frac{\cosh( \ell(\tau_0')/2)}{\cosh(  \ell(\tau_0)/2)},$$
The ortho spectrum  $\mathcal{O}(X)$
is discrete so there are only finitely many distinct values of the 
ortho spectrum  $\mathcal{O}(X)$
less than $F(\ell(\alpha_{0}))$.
Thus one can bound $\ell(\alpha_1)$ from below
using the hyperbolic granulosity
at  $F(\ell(\alpha_{0})) $.
Note that, by symmetry,
this lower bound is also a lower bound for 
$\ell(\beta_1)$.


Now suppose that we have bounded $\alpha_n$ from below
and consider the pair of pants with boundary geodesics
$\alpha_n, \alpha_{n+1}, \beta_{n+1}$.
We now construct arcs
$\tau_n, \tau_n'$ (see Figure \ref{fig.tan_n}).
First join $\alpha_0$ and $\alpha_{n}$ by a shortest geodesic $a$, and then connect the endpoint of $a$ on $\alpha_{n}$ and $\alpha_{n+1}$ by a shortest geodesic $a'$. 
The concatenation $a\cup a'$ is a path connecting $\alpha_{0}$ and $\alpha_{n+1}$.
Then consider the pair of pants $P'_{n+1}$ whose boundary curves are $\alpha_{0}$, $\alpha_{n+1}$ and the boundary of a small neighborhood of 
$\alpha_{0}\cup\alpha_{n+1}\cup \{a\cup a '\}$.
Now we define $\tau_{n}$ and $\tau_{n+1}$ as in Lemma \ref{ortho determines alpha} applied for $P'_{n+1}$
so that 
\begin{equation}\label{eq.tau_{n}}
 2\cosh( \ell(\alpha_{n+1})/2) 
= \frac{\cosh( \ell(\tau_n')/2)}{\cosh(  \ell(\tau_n)/2)}.
\end{equation}
Let 
  $\tau, \tau'$ be the ortho geodesics in $P_n$ as before.
Then the arc $\tau_n$ (resp. $\tau'_{n}$) is the geodesic representative of a piecewise geodesic constructed
by following $a$ to $\alpha_n$,
traveling round $\alpha_n$ to the foot of $\tau$ (resp. $\tau'$),
following $\tau$ (resp. $\tau'$)
then going round $\alpha_n$ to the foot of $a$,
and finally returning to $\alpha_0$ via $a$.
There are choices of the direction when we go around $\alpha_{n}$, and we choose the direction so that corresponding geodesic representative
$\tau_{n}$ (resp. $\tau_{n}'$) sits on $P'_{n+1}$.

 \begin{figure}[H]
  \centering
  \subfloat  
{\includegraphics[scale=.23]{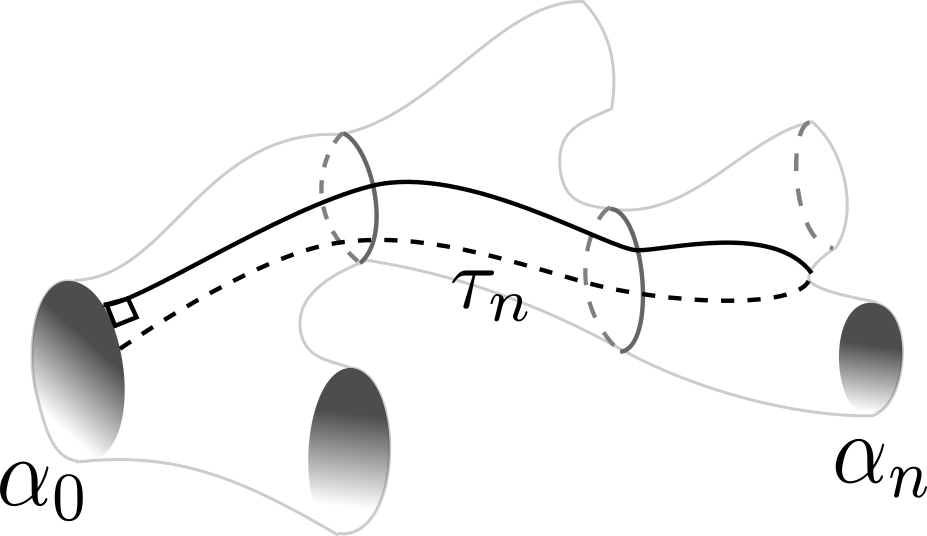} 
}
 \hfill
  \subfloat
  {\includegraphics[scale=.23]{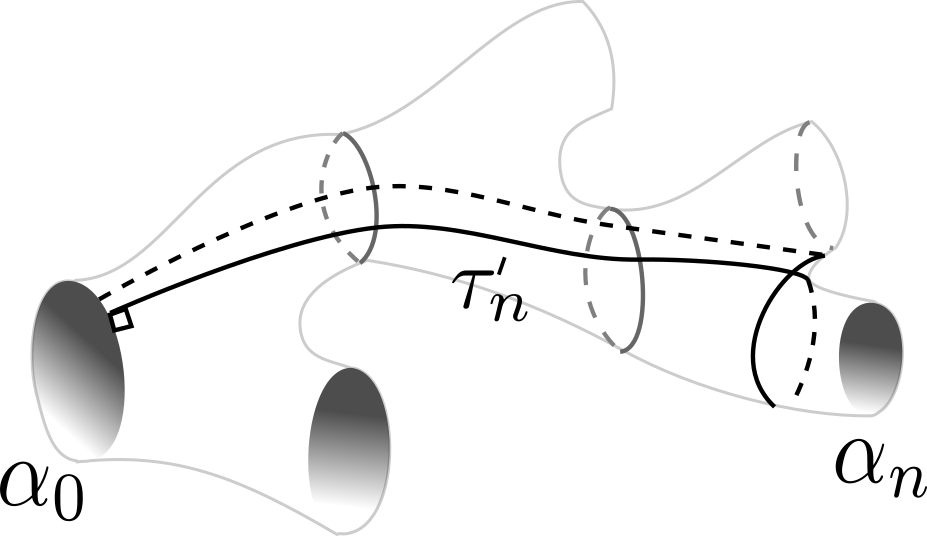} 
} 

\caption{The ortho geodesics $\tau_n$ and $\tau_n'$.}
   \label{fig.tan_n}
\end{figure}
%

Since $\tau_n$ is the shortest curve in the homotopy class rel $\partial \Sigma$
its length  is bounded above by 
the piecewise geodesic and  we obtain  
$$ \ell(\tau_n) \leq 2 \ell(a) + \ell(\tau) +  \ell(\alpha_n) 
\leq  2 \ell(a) + F(\ell(\alpha_n)).$$
Further, by constructing a  piecewise geodesic 
in the same homotopy class as $\tau'_n$, one sees 
$$ \ell(\tau'_n)  \leq 2 \ell(a) +  F(\ell(\alpha_n)) + B.$$
Once we bound $\ell(a)$ from above, the same argument as before shows there is a 
strictly positive lower bound for $\ell(\alpha_{n+1})$ 
which does not depend on $k$.

We now show how to get an  upper bound of $\ell(a)$.
Given a pair of pants with boundary $\alpha,\beta,\gamma$,
we have
\begin{equation}\label{eq.distance between boundary}
\cosh(d(\gamma,\alpha)) = \\
\sinh(\ell(\gamma))\sinh(\ell(\alpha))\cosh(\ell(\beta))-\cosh(\ell(\gamma))\cosh(\ell(\alpha)).
\end{equation}
Hence one obtains an upper bound from $d(\gamma,\alpha)$ 
from the bounds for the length of $\alpha,\beta$ and $\gamma$.
There is  a piecewise geodesic curve $\sigma$ homotopic to $a$
obtained by traveling alternately along the shortest ortho geodesic and 
the arcs of the boundary geodesic of the pants connecting 
the feet of the  ortho geodesics.
Then Eq.(\ref{eq.distance between boundary}) 
and upper bound of the length of the pants curve yield an upper bound for  
$\ell(\sigma)$ 
and hence for $\ell(a)$.

\end{proof}

\section{Abelian covers and the spectra} \label{sec.covers}

We now give a simple construction
which shows that the ortho spectrum
fails to distinguish surfaces with different lengths of systole
and so it fails to distinguish surfaces which are not  isometric.
To do this it is convenient to make a special choice 
for the   hyperbolic structure $X$ on $\MM$
which will make it easy to determine the length
of the systole on degree $n$ covers of $X$.

For completeness we include the following lemma
from which  the reader should be able to see 
why our construction is much simpler than Sunada's.

\begin{lem}
\label{degree d covers}
Let $\tilde{X}$ be an orientable hyperbolic surface with boundary:

\begin{enumerate}
\item
If $G$ is  group acting freely on $\tilde{X}$.
Then the subgroup of $G$ that leaves 
an ortho geodesic $\tilde{\tau}$  invariant is trivial.
\item
Let $\pi: \tilde{X} \rightarrow X$ be a regular degree $d$ 
cover of an orientable hyperbolic surface with boundary $X$.
If $\tau$ is an ortho geodesic on $X$
then it is covered by exactly $d$ ortho geodesics on $ \tilde{X}$.
\end{enumerate}
\end{lem}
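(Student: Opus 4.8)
The plan is to prove the two assertions in order, the first being essentially a rigidity statement about the geodesic realizing an ortho geodesic and the second a straightforward covering-space count that uses the first. For part (1), I would argue as follows. Suppose $g \in G$ is nontrivial and leaves the ortho geodesic $\tilde\tau$ invariant. Since $\tilde\tau$ is a compact geodesic arc meeting $\partial\tilde X$ perpendicularly at its two feet, any isometry preserving $\tilde\tau$ must either fix it pointwise or act on it as the reflection swapping the two feet. The group $G$ acts freely and by orientation-preserving isometries (as $\tilde X$ is orientable and the cover is of hyperbolic surfaces), so it cannot act as a reflection of the arc; hence $g$ fixes $\tilde\tau$ pointwise, but then $g$ fixes an interior point of $\tilde X$, contradicting freeness. (If one prefers to lift to $\HH$: $\tilde\tau$ lifts to the common perpendicular of two sides of $C(\Lambda)$, and the stabilizer of this configuration inside the group is generated by a reflection in the perpendicular, which is orientation-reversing, so the orientation-preserving stabilizer is trivial.) The only subtle point here is to make sure ``leaves $\tilde\tau$ invariant'' is interpreted as leaving the \emph{arc} invariant rather than merely permuting the (countable) collection of all ortho geodesics; with that reading the argument above is complete.

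For part (2), let $\tau \subset X$ be an ortho geodesic and let $\pi:\tilde X \to X$ be the regular degree-$d$ cover with deck group $G$, so $|G| = d$ and $X = \tilde X / G$. An ortho geodesic on $\tilde X$ covering $\tau$ is precisely a connected component of $\pi^{-1}(\tau)$: indeed $\pi^{-1}(\tau)$ is a $1$-manifold, each component maps to $\tau$ as a covering, and since $\tau$ is a simply connected arc (contractible, with endpoints on the boundary) every such component maps homeomorphically onto $\tau$, hence is itself an embedded geodesic arc meeting $\partial\tilde X$ perpendicularly, i.e. an ortho geodesic. The deck group $G$ acts on the set of components of $\pi^{-1}(\tau)$, and this action is transitive because the cover is regular. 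The total number of points in $\pi^{-1}(\tau)$ lying over a fixed interior point of $\tau$ is exactly $d$; equivalently, $G$ acts on the fiber over that point with $d$ elements and transitively. The stabilizer in $G$ of a given component $\tilde\tau$ is exactly the subgroup leaving $\tilde\tau$ invariant, which by part (1) is trivial. Therefore $G$ acts freely and transitively on the set of components, so there are $|G| = d$ of them, which is the claim.

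The step I expect to require the most care is the passage, in part (2), from ``component of $\pi^{-1}(\tau)$'' to ``ortho geodesic covering $\tau$'', i.e. checking that these two notions coincide: one must verify that a geodesic arc downstairs lifts to geodesic arcs upstairs (immediate, since $\pi$ is a local isometry), that perpendicularity at the boundary is preserved (also immediate, local isometry), and conversely that every ortho geodesic on $\tilde X$ mapping onto $\tau$ arises this way rather than, say, wrapping around — but this last point is exactly where the contractibility of the arc $\tau$ is used, since a covering of a simply connected space is trivial, forcing each component to be a single homeomorphic copy. Once this identification is in hand, the count is the orbit--stabilizer theorem applied to the free transitive $G$-action, with part (1) supplying the triviality of the stabilizer.
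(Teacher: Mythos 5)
Your route is the same as the paper's: part (2) is reduced to part (1) by taking $G$ to be the deck group (your explicit identification of the lifts of $\tau$ with the components of $\pi^{-1}(\tau)$ and the orbit--stabilizer count are correct and merely flesh out what the paper leaves implicit), and part (1) is a fixed-point argument on the arc. However, one step of part (1) fails as written. You dispose of the endpoint-swapping case by asserting that an orientation-preserving isometry cannot restrict to the flip of an invariant geodesic arc. That is false: the rotation by $\pi$ about the midpoint of $\tilde\tau$ is orientation-preserving, preserves $\tilde\tau$, and swaps its two feet; upstairs it preserves the configuration consisting of the two sides of the convex hull together with their common perpendicular (interchanging the two sides). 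For the same reason your parenthetical claim that the orientation-preserving stabiliser of that configuration in $\mathrm{Isom}^+(\mathbb{H})$ is trivial is inaccurate: it can contain this $\pi$-rotation. So orientation alone does not exclude the reversing case, and in any event the hypothesis of part (1) is only that $G$ acts freely on an orientable surface, not that it acts by orientation-preserving maps.

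The correct exclusion --- and the one the paper uses --- is that an element reversing $\tilde\tau$ must fix the midpoint of $\tilde\tau$, contradicting freeness of the action; this is exactly the ingredient you already invoke in the pointwise-fixing case, so the repair is immediate, but as written your case analysis has a hole precisely where the geometric content lies. With that one sentence corrected, your proof coincides in substance with the paper's: endpoints fixed forces a fixed point (or, as in the paper, identity via orientability), endpoints swapped forces a fixed midpoint, and freeness kills both; part (2) then follows by the free, transitive deck-group action on the $d$ lifts.
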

\begin{proof}
The second point follows from the first by taking $G$ 
to be the group of deck transformations of the cover.

To prove the first point one considers
 $H$ the stabiliser of $\tilde{\tau}$  in $G$.
Any $h\in H$ leaves the end points of $\tilde{\tau}$ invariant and fixes its midpoint,
which contradicts the fact that $G$ acts freely.
\end{proof}

\begin{lem}
\label{systole in covers}
For any $n>0$ there is a choice of  hyperbolic structure $X$ on $\MM$
such that:
\begin{enumerate}
\item 
The systole is shorter than
 $1/n$ times the length of the next shortest curve which has length at least $2\log(1+\sqrt 2)$.
 \item
Let $\tilde{X}$ 
be a degree $n$ cover of $X$
then the systole of the cover has 
length at most $n \ell(\alpha)$.
 \end{enumerate}
\end{lem}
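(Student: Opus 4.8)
The plan is to make an explicit choice of $X$ on $\MM$ in terms of Fenchel--Nielsen coordinates and then read off both assertions. Recall that a one-holed torus admits a pants decomposition consisting of a single simple closed curve $\alpha$, cutting $\MM$ into a pair of pants $P$ with two of its boundary curves glued to each other along $\alpha$. The hyperbolic structure is then determined by the length $\ell(\alpha)$, the twist parameter $t$ along $\alpha$, and the constraint that the remaining boundary $\partial\MM$ has the length dictated by the area-$2\pi$ normalization (equivalently by Gauss--Bonnet). First I would fix the twist parameter to be $0$ (or any value making $\alpha$ perpendicular to the short ortho geodesic, so that $\alpha$ is actually the systole and not some competing non-simple curve) and then take $\ell(\alpha)=\varepsilon$ to be extremely small. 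For the first assertion I must check that, once $\varepsilon$ is small enough, $\alpha$ really is the systole and every other closed geodesic is at least $n\varepsilon$ long: this is exactly the content of the Collar Lemma, since a geodesic of length $\varepsilon$ has an embedded collar of width $\approx\log(1/\varepsilon)\to\infty$, and any closed geodesic other than $\alpha$ either crosses $\alpha$ (hence crosses the collar twice, so has length $\geq 2\log(1/\varepsilon)\gg n\varepsilon$) or is disjoint from $\alpha$ and thus freely homotopic to $\partial\MM$ or to $\alpha$ itself (on a one-holed torus the only simple closed curve disjoint from $\alpha$ is $\alpha$ or the boundary). So for $\varepsilon$ small enough the ratio $\ell(\text{second shortest})/\ell(\alpha)$ exceeds $n$, giving (1).

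For the second assertion I would take the specific degree-$n$ cover $\tilde X\to X$ corresponding to the homomorphism $\pi_1(\MM)\to\ZZ/n\ZZ$ sending $\alpha$ to a generator and sending a chosen simple closed curve $\beta$ dual to $\alpha$ to $0$. Under this cover $\alpha$ does \emph{not} lift to a closed curve of length $\varepsilon$; instead the full preimage of $\alpha$ is a single closed geodesic of length $n\varepsilon$, since one must go around $\alpha$ exactly $n$ times before the $\ZZ/n\ZZ$-coordinate returns to $0$. This preimage is a simple closed geodesic on $\tilde X$, hence the systole of $\tilde X$ is at most $n\ell(\alpha)$, which is (2). (If one instead wants $\alpha$ itself to lift, one chooses the cover dual to $\beta$; but then the systole estimate is different, so the cover dual to $\alpha$ is the right choice here.)

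The only place requiring genuine care — the main obstacle — is verifying in (1) that with the chosen twist parameter the curve $\alpha$ is honestly the shortest closed geodesic, i.e.\ that no \emph{non-simple} closed geodesic sneaks below length $n\varepsilon$. This is handled by the same collar argument: a non-simple closed geodesic has length bounded below by twice the width of the collar of any short curve it essentially meets, and if it meets no short curve essentially then it is confined to the thick part, where lengths are bounded below by a constant independent of $\varepsilon$; in either case its length exceeds $n\varepsilon$ once $\varepsilon$ is small. A secondary point is that in (2) one should check the preimage of $\alpha$ is connected and of length exactly $n\varepsilon$ rather than a union of shorter loops — this is immediate from the definition of the cover as dual to $\beta$ (so $\alpha$ generates the deck group $\ZZ/n\ZZ$), and connectedness follows since the subgroup is generated by $\alpha^n$ together with all of $\langle\beta\rangle$-conjugates, but the loop $\tilde\alpha$ covering $\alpha$ has stabilizer exactly $\langle\alpha^n\rangle$ in the deck group, so there is a single component. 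No further trigonometry is needed beyond the collar estimates.
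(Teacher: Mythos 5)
Your argument for part (1) is correct but takes a different route from the paper. You work in Fenchel--Nielsen coordinates and rule out short competitors to $\alpha$ by the Collar Lemma (for geodesics crossing $\alpha$), by the classification of simple closed curves disjoint from $\alpha$ on $\MM$, and by the universal Margulis-type lower bound for non-simple closed geodesics. The paper instead cuts along $\alpha$, compares the resulting incomplete metric on the pair of pants with the complete Poincar\'e metric of the thrice-punctured sphere via Schwarz--Pick, and so gets the explicit uniform bound $2\,\mathrm{arccosh}(3/2)$ for every closed geodesic other than $\alpha$; it then simply sets $\ell(\alpha)=(1/n)\mathrm{arccosh}(3/2)$. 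Both work; the paper's version yields an explicit constant with no ``$\varepsilon$ small enough'' quantifier, while yours is softer but self-contained. One small correction to your setup: the boundary length of $\MM$ is \emph{not} dictated by the area-$2\pi$ normalisation. By Gauss--Bonnet every hyperbolic structure with geodesic boundary on $\MM$ has area $2\pi$, and the boundary length is a free third Fenchel--Nielsen parameter; this is harmless here (fix it to be any constant bounded away from $0$, so the boundary geodesic never competes with $\alpha$), but as written the sentence is wrong. The fuss about the twist parameter is unnecessary: for $\varepsilon$ small the collar argument makes $\alpha$ the systole for every twist.

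For part (2) there is a genuine mismatch with the statement. The lemma asserts the bound $\mathrm{sys}(\tilde X)\leq n\,\ell(\alpha)$ for \emph{an arbitrary} degree $n$ cover $\tilde X$, and this generality is exactly what Theorem \ref{covers} uses: the covers $\tilde X_m$ there are the cyclic covers in which $\alpha$ maps to $2^m$, and in $\tilde X_m$ the preimage of $\alpha$ consists of geodesics of length $2^{k-m}\ell(\alpha)$, not $2^{k}\ell(\alpha)$. You instead fix the single cyclic cover dual to $\alpha$ (sending $\alpha$ to a generator of $\ZZ/n\ZZ$), and your closing parenthetical (``the cover dual to $\alpha$ is the right choice here'') suggests you read (2) as existential in the cover; that reading would not feed into the proof of Theorem \ref{covers}. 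The fix is immediate and needs none of your connectedness discussion: in any degree $n$ cover, each component of the preimage of $\alpha$ is a closed geodesic covering $\alpha$ with some degree at most $n$ (the degrees over the components sum to $n$), hence has length at most $n\,\ell(\alpha)$, so the systole bound holds for every degree $n$ cover with no choice made.
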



\begin{proof}
Let $\alpha$ be  an essential  closed simple curve on $\MM$
and $\gamma$ a closed 
curve homotopic to the boundary
$\partial \MM$.
By the Collar Lemma 
for any hyperbolic structure on $\MM$
such that $\ell(\alpha)$ is sufficiently small
the second shortest curve on $\MM$ 
is necessarily a curve $\alpha'$ disjoint 
from $\alpha$.
Note that $\alpha'$ is not simple since when we cut $\MM$ along $\alpha$ we have a pair of pants, and there is no simple closed curves other than the boundary on the pair of pants.
It is a result of Hempel \cite[Corollary 3.6]{Hempel} that every non-simple has length greater or equal to $2\log(1+\sqrt 2)$.


Now choose a hyperbolic metric on $P$ such that
there is a boundary  geodesic of length
$\ell(\gamma)$
and the other two boundary geodesics are of length 
$$(1/n) \times 2\log(1+\sqrt 2).$$
By identifying these two boundary geodesics 
one obtains a hyperbolic structure $X$ on $\MM$.
Then $\ell(\alpha)$ on $X$ equals $2\log(1+\sqrt 2)/n$ and 
the length of $\ell(\alpha')\geq 2\log(1+\sqrt 2) = n\ell(\alpha')$.

%

\end{proof}

\begin{thm}\label{covers}
Let $k>0$ and  set $n = 2^k$
and  $X$ be a hyperbolic structure on $\MM$ obtained as in
Lemma \ref{systole in covers}.
Then there are $k$ cyclic covers $\tilde{X}_i$ of $X$ 
such that the length of the systole
is different for each of these covers.

\end{thm}

\begin{proof}
Let $\alpha$ denote the shortest essential  closed geodesic 
on $X$.
Since the fundamental group of $\MM$ 
is free on two generators there is a homomorphism  onto $\ZZ$.
We choose generators $\alpha, \beta$ for $
 \pi_1(\MM)$ such that $\alpha$ is freely homotopic to $\alpha$
and define a homomorphisms by
$$\pi_1(\MM) \rightarrow \ZZ,\,  
\alpha \mapsto 2^m,\, \beta \mapsto 1.$$
Reducing modulo $2^k$ the image of $\alpha$ 
we obtain a surjective homomorphism 
$$\pi_1(\MM) \rightarrow \ZZ/2^k \ZZ$$ 
and we denote by $\tilde{X}_m$
the   regular degree $2^k$   covering   corresponding to the kernel 
of this homomorphism.

Each closed geodesic 
$\tilde{\beta'} \subset \tilde{X}_m$
covers some closed geodesic 
$\beta' \subset X$ 
and the degree of this cover
is the order of the image of 
$\beta'$ in $\ZZ/2^k \ZZ$.
In particular $\ell( \tilde{\beta'}) \geq \ell( \beta' )$
and if $\beta' \neq \alpha$
$$\ell( \tilde{\beta'}) \geq 2 \, \log(1+\sqrt 2) >  2^k \ell(\alpha),$$
so, 
since $2^k$ is the degree of the cover 
$\tilde{X}_m \rightarrow X$,
the shortest closed geodesic on 
$ \tilde{X}_m$ must cover $\alpha$.

Finally, we can compute the length of the systole: if  $\tilde{\alpha} \subset \tilde{X}_m$
covers  $\alpha $ then
$$ \ell(\tilde{\alpha}) =  2^{k-m} \times \ell(\alpha),$$
so that the systole distinguishes the covers $\tilde{X}_m$.

\end{proof}

\appendix
\section{dimension and ortho spectrum in dimension 3}

Let $M$ be a compact hyperbolic $3$-manifold with totally geodesic boundary.
We let $\Gamma$ be a Kleinian group such that the convex core of $\mathbb{H}^{3}/\Gamma$ is isometric to $M$.
It is observed by several authors \cite{BK, Cal} that the ortho spectrum of $M$ determines the volume of $M$ and the area of $\partial M$.
In this section, we prove that the Hausdorff dimension of the limit set $\Lambda(\Gamma)$ of $\Gamma$ can also be determined by the 
ortho spectrum of $M$.
Since the boundary of $M$ is totally geodesic and hence the limit sets of the boundary subgroups are round circles,
the Ahlfors' finiteness theorem \cite{Ahl} implies that the limit set $\Lambda(\Gamma)$ is a circle packing of $\partial\mathbb{H}^{3} = \mathbb{C}\cup \{\infty\}$.
By considering conjugate if necessary, we may assume that $\infty\in\partial\mathbb{H}^{3}\setminus\Lambda(\Gamma)$.
First, we recall the work of Parker.
Let $\mathcal{C}$ be a circle packing of $\mathbb{C}=\partial\mathbb{H}^{3}\setminus\{\infty\}$.
When we discuss the {\em radius} of circles in $\mathcal{C}$, we consider the Euclidean metric on $\mathbb{C}$.
Let $\mathcal{R}(\mathcal{C})$ denote the set of radii of the circles in $\mathcal{C}$ counted with multiplicity. 
Then {\em the circle packing exponent} of $\mathcal{C}$ is 
$$e_{\mathcal{C}}:=\inf\{t:\sum_{r\in \mathcal{R}(\mathcal{C})} r^{t} <\infty\}.$$
Parker showed the following.
\begin{thm}[\cite{Par}]\label{thm.Parker}
Let $\Gamma$ be as above. Suppose $\infty\in\partial\mathbb{H}^{3}\setminus \Lambda(\Gamma)$.
Then the circle packing exponent $e$ of $\Lambda(\Gamma)$ equals the Hausdorff dimension $d$ of $\Lambda(\Gamma)$.
\end{thm}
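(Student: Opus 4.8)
The plan is to identify both the circle packing exponent $e$ and the Hausdorff dimension $d$ with the critical exponent $\delta(\Gamma)$ of the Poincar\'e series $\sum_{g\in\Gamma}e^{-s\,d(o,go)}$, for a fixed basepoint $o\in\mathbb{H}^3$. Since $M$ is compact, $\Gamma$ is convex cocompact, so Sullivan's theorem gives $d=\delta(\Gamma)$ directly; it therefore suffices to show $e=\delta(\Gamma)$. For this I would first record the structure of the packing. As $\partial M$ is a disjoint union of closed hyperbolic surfaces $S_1,\dots,S_k$, the components of $\Omega(\Gamma)$ are exactly the $\Gamma$-translates of $k$ peripheral disks $D_1,\dots,D_k$ (the lifts of the $S_j$), the stabiliser of $D_j$ being a cocompact Fuchsian group $\Gamma_j\cong\pi_1(S_j)$ spanning a geodesic plane $\widetilde H_j$ with $\partial_\infty\widetilde H_j=C_j=\partial D_j$; compactness of $M$ forbids cusps, hence tangencies, so the closed disks are pairwise disjoint and the circles of $\mathcal{C}$ are indexed by $\bigsqcup_j\Gamma/\Gamma_j$. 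I would also note at the start that $\delta(\Gamma_j)=1$ while $\delta(\Gamma)>1$: the latter because $\Lambda(\Gamma)$ is the residual set of an infinite packing by disjoint round disks, hence has dimension strictly between $1$ and $2$. This strict inequality is the engine of the argument.

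Next comes the dictionary between radii and hyperbolic distances. Conjugating so that $\infty$ lies in the outer complementary disk, every circle of $\mathcal{C}$ lies in a fixed bounded region, so only finitely many have radius exceeding any given $\varepsilon_0$; discarding these finitely many circles affects neither $e$ nor the convergence of any series $\sum r^t$. For a circle $C$ of radius $r$ and centre $z_0$ spanning the geodesic plane $H_C$, a direct computation with the inversion fixing $H_C$ gives, in the upper half-space model with $o=(0,0,1)$,
\begin{equation*}
\sinh d(o,H_C)=\frac{\bigl||z_0|^2+1-r^2\bigr|}{2r},
\end{equation*}
so that $r\asymp e^{-d(o,H_C)}$ once $r$ is small and $z_0$ bounded. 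Applied to $C=gC_j$ this yields $r(gC_j)\asymp e^{-d(o,g\widetilde H_j)}$ for all but finitely many circles, with constants depending only on the finite data. Hence $\sum_{C\in\mathcal{C}}r(C)^t<\infty$ if and only if $\sum_{j=1}^{k}\sum_{g\Gamma_j\in\Gamma/\Gamma_j}e^{-t\,d(o,g\widetilde H_j)}<\infty$.

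It remains to compare this double sum with the Poincar\'e series of $\Gamma$ for $t>1$. Put $o\in\widetilde H_1$ and, in each coset $g\Gamma_j$, let $\hat g$ be the representative nearest to $o$; the triangle inequality gives $|d(o,g\widetilde H_j)-d(o,\hat go)|\le d(o,\widetilde H_j)=O(1)$, so the double sum is comparable to $\sum_j\sum_{g\Gamma_j}e^{-t\,d(o,\hat go)}$, which is termwise dominated by $\sum_{g\in\Gamma}e^{-t\,d(o,go)}$. This already gives $e\le\delta(\Gamma)$. For the reverse inequality I would write $\Gamma=\bigsqcup_{g\Gamma_1}\hat g\,\Gamma_1$ and use the exact Pythagorean identity for the totally geodesic plane $\hat g\widetilde H_1$, namely $\cosh d(o,\hat g\gamma o)=\cosh d(o,\hat g\widetilde H_1)\,\cosh d(p,\hat g\gamma o)$ with $p$ the foot of the perpendicular from $o$, to factor
\begin{equation*}
\sum_{g\in\Gamma}e^{-t\,d(o,go)}\;\asymp\;\sum_{g\Gamma_1}e^{-t\,d(o,\hat g\widetilde H_1)}\;\sum_{\gamma\in\Gamma_1}e^{-t\,d(\hat g^{-1}p,\,\gamma o)}.
\end{equation*}
Because $\hat g$ is the nearest representative, $\hat g^{-1}p$ stays within bounded distance of $o$ inside $\widetilde H_1$, so each inner sum is bounded above and below by constants depending only on $t$ and $\Gamma_1$; here one uses $\delta(\Gamma_1)=1$ (so the Fuchsian Poincar\'e series converges for the relevant $t>1$) and $\delta(\Gamma)>1$ (so the range $1<t<\delta(\Gamma)$ is non-empty). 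It follows that $\sum_{g\in\Gamma}e^{-t\,d(o,go)}$ and $\sum_{g\Gamma_1}e^{-t\,d(o,\hat g\widetilde H_1)}\le\sum_j\sum_{g\Gamma_j}e^{-t\,d(o,\hat go)}$ diverge together for $1<t<\delta(\Gamma)$, whence $e\ge\delta(\Gamma)$; combined with the other inequality, $e=\delta(\Gamma)=d$.

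The main obstacle is this last comparison — making the Pythagorean factorisation precise and, above all, obtaining the uniform-in-the-coset control of the Fuchsian fibre sums $\sum_{\gamma\in\Gamma_1}e^{-t\,d(\hat g^{-1}p,\gamma o)}$. This is exactly the point where the strict inequality $\delta(\Gamma_j)<\delta(\Gamma)$ — and hence the hypotheses that $\partial M$ is totally geodesic (forcing the $\Gamma_j$ to be Fuchsian with $\delta=1$ and $\Lambda(\Gamma)$ to be a circle packing) and that $M$ is compact (no parabolics, disjoint disks) — enters in an essential way.
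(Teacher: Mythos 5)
The paper does not actually prove this statement: it is quoted as Parker's theorem and the reference [Par] is cited, so there is no internal proof to compare with. Your argument is, in substance, Parker's own route specialised to the convex cocompact case (Parker also handles geometrically finite groups with parabolics, where tangent circles need extra care): identify $d$ with the critical exponent $\delta(\Gamma)$ via Sullivan, convert radii into distances to the geodesic planes through $r\asymp e^{-d(o,g\widetilde H_j)}$, and compare the resulting coset series with the Poincar\'e series of $\Gamma$, using $\delta(\Gamma_j)=1<\delta(\Gamma)$. I believe the proof is correct, and the ``main obstacle'' you flag is in fact already resolved by your setup: by the Pythagorean identity, $\hat g$ being the representative of $g\Gamma_1$ nearest to $o$ is equivalent to $\hat g^{-1}p$ lying in the Dirichlet domain of $\Gamma_1$ acting on $\widetilde H_1$ centred at $o$, and this domain is compact because $\Gamma_1$ is a cocompact Fuchsian group; hence $d(o,\hat g^{-1}p)$ is uniformly bounded and the fibre sums are uniformly comparable, for each fixed $t>1$, to the Fuchsian Poincar\'e series of $\Gamma_1$. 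Two small repairs are needed. First, the claimed bound $|d(o,g\widetilde H_j)-d(o,\hat g o)|\le d(o,\widetilde H_j)$ is only half correct: the direction $d(o,\hat g o)\le d(o,g\widetilde H_j)+C$ again requires the cocompactness of $\Gamma_j$ on $\widetilde H_j$, so $C$ must include the diameter of a fundamental domain of that action, not just $d(o,\widetilde H_j)$. Second, the crucial input $\delta(\Gamma)>1$ is true but not free; it should be attributed explicitly, e.g.\ to Larman's theorem that the residual set of an infinite disk packing has Hausdorff dimension strictly greater than $1$ (a companion of the result of Larman already invoked in the appendix), or derived from the strictness $\delta(\Gamma)>\delta(\Gamma_1)$ for divergence-type groups via uniqueness of Patterson--Sullivan densities, rather than simply asserted.
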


We now define the exponent of ortho spectrum as follows.
Let $\mathcal{O}(M)$ denote the ortho spectrum of $M$.
Then we define the ortho spectrum exponent by
$$e_{\mathcal{O}}:=\inf\{t:\sum_{l\in \mathcal{O}(M)}\frac{1}{e^{tl}}<\infty\}.$$
Our goal is the following.
\begin{thm}
Let $M$ and $\Gamma$ be as above. Suppose $\infty\in\partial\mathbb{H}^{3}\setminus \Lambda(\Gamma)$.
Then the ortho spectrum exponent $e_{\mathcal{O}}$ of $M$ equals the Hausdorff dimension $d$ of $\Lambda(\Gamma)$.
\end{thm}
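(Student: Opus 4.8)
The plan is to relate the ortho spectrum exponent $e_{\mathcal{O}}$ to the circle packing exponent $e$ of $\Lambda(\Gamma)$ and then invoke Theorem \ref{thm.Parker}. The key geometric input is the same one used implicitly in Basmajian's identity: there is a natural correspondence between ortho geodesics of $M$ and pairs consisting of a lift of a boundary face of the convex core together with another face "seen" through it, and the length of the ortho geodesic is controlled by the Euclidean radius of the corresponding disc in the circle packing $\Lambda(\Gamma) \subset \CC$. Concretely, after normalizing so that $\infty \notin \Lambda(\Gamma)$, each geodesic plane bounding the convex core has boundary a round circle in $\CC$; the feet of an ortho geodesic $\alpha^*$ lie one on each of two such planes, and a short computation in $\HH^3$ (using that the common perpendicular between two disjoint geodesic planes with bounding circles $C_1, C_2$ has length comparable to $-\log$ of the "gap" between $C_1$ and $C_2$, which in turn is comparable to the radius of the smaller circle when the circles are nested in a circle packing) gives an estimate of the form
\begin{equation}
e^{-\ell(\alpha^*)} \asymp r(\alpha^*),
\end{equation}
where $r(\alpha^*)$ is the radius of the appropriate circle in the packing and the implied constants are absolute. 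I would first make this estimate precise: the analogue of \eqref{estimate} in dimension $3$, replacing the surface boundary-length computation by the corresponding computation for the common perpendicular of two planes in $\HH^3$ whose ideal boundaries are nested round circles.

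Granting the comparison $e^{-\ell(\alpha^*)} \asymp r(\alpha^*)$, the next step is to match up the index sets. By Basmajian-type combinatorics, the ortho geodesics of $M$ with a foot on a fixed boundary component biject with the disjoint sub-discs of the corresponding disc in the circle-packing complement of $\Lambda(\Gamma)$ at the "next level"; summing over all boundary components of $M$ (finitely many) and iterating, the multiset $\{r(\alpha^*) : \alpha^* \in \mathcal{O}(M)\}$ and the multiset $\mathcal{R}(\Lambda(\Gamma))$ of all radii in the packing differ only by finitely many terms and by bounded multiplicative distortion of each term. Therefore, for any $t > 0$,
\begin{equation}
\sum_{l \in \mathcal{O}(M)} e^{-tl} < \infty \iff \sum_{r \in \mathcal{R}(\Lambda(\Gamma))} r^{t} < \infty,
\end{equation}
which gives $e_{\mathcal{O}} = e_{\mathcal{C}} = e$; combined with Theorem \ref{thm.Parker} this yields $e_{\mathcal{O}} = d$, as desired.

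The main obstacle I anticipate is the bookkeeping in the second step: verifying that the ortho spectrum really does enumerate all the circles of the packing (with the right multiplicities, up to a finite discrepancy) rather than only those adjacent to a fixed face, and controlling the multiplicative distortion in $e^{-\ell(\alpha^*)} \asymp r(\alpha^*)$ uniformly over all ortho geodesics — in particular handling the "long and thin" ortho geodesics whose feet lie on planes that are nearly tangent, where the Euclidean picture degenerates. One clean way to organize this is to fix the half-space model with $\infty \notin \Lambda(\Gamma)$, rescale each face of $\partial C(\Lambda)$ to a standardized position by an element of $\Gamma$ or a Euclidean similarity, and run the estimate there; since exponents of convergence are insensitive to bounded multiplicative perturbations and to discarding finitely many terms, this suffices. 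I would also remark that the hypothesis $\infty \notin \Lambda(\Gamma)$ is exactly what makes all circles in the packing genuinely round with finite radii, so that $\mathcal{R}(\mathcal{C})$ is well-defined — this is the same normalization used in Theorem \ref{thm.Parker}.
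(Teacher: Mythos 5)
Your first step (the comparison $e^{-\ell(\alpha^*)}\asymp r(\alpha^*)$) is indeed the geometric heart of the paper's argument, but note that the two-sided constant is \emph{not} absolute: if $\gamma$ is the circle under a lift of the boundary component carrying one foot and $H=\mathrm{Stab}_\Gamma(\gamma)$, then within a single $H$-orbit of circles the Euclidean radii shrink without bound while the ortho length stays fixed (push a circle toward $\gamma$ by an element of $H$ with large translation length). The paper gets a uniform constant only after restricting to circles meeting a fixed compact fundamental domain $D$ for the action of $H$ on the disc bounded by $\gamma$; this is where the $\epsilon$ in its inequality $\frac{1}{Ce^{l}}\leq r\leq\frac{1}{e^{l}}$ comes from.

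The genuine gap is the index-matching step. Ortho geodesics with a foot on a given boundary component do not correspond to individual circles of the packing: they correspond to $H$-orbits of circles (equivalently, double cosets $H\backslash\Gamma/H_j$), whereas $\mathcal{R}(\Lambda(\Gamma))$ is indexed by single cosets $\Gamma/H_j$. So the multiset $\{r(\alpha^*)\}$ and $\mathcal{R}(\Lambda(\Gamma))$ differ by \emph{infinitely} many terms, with unbounded distortion inside each orbit, and the claimed equivalence $\sum_{l\in\mathcal{O}(M)}e^{-tl}<\infty \iff \sum_{r\in\mathcal{R}}r^{t}<\infty$ is not justified by your bookkeeping (the ``levels'' and ``iterating'' picture also misreads the structure: the complementary circles are pairwise disjoint, with no nesting hierarchy). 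Only one direction survives: choosing for each ortho geodesic a representative circle meeting $D$ exhibits a sub-multiset of $\mathcal{R}(\Lambda(\Gamma))$ comparable to $\{e^{-\ell}\}$, giving $e_{\mathcal{O}}\leq e_{\mathcal{C}}=d$ via Theorem \ref{thm.Parker} -- this is exactly the paper's upper bound. The reverse inequality $e_{\mathcal{O}}\geq d$ needs a separate input that your proposal omits: the paper invokes Larman's theorem \cite{Lar} (the exponent of a circle packing of a \emph{compact} set is at least the Hausdorff dimension of the packed set), applied to a compact truncation $K$ that contains all circles meeting $D$ and is covered by finitely many $H$-translates of $D$, so that $e_{D}=e_{K}\geq d$. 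Without this (or some substitute, e.g. Patterson--Sullivan shadow-type estimates for the double-coset series), your argument only proves $e_{\mathcal{O}}\leq d$.
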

\proof
Let $\gamma$ be a circle in $\Lambda(\Gamma)$ and $H<\Gamma$ be the (setwise) stabiliser of $\gamma$.
By applying conjugate if necessary, we may suppose that $\gamma$ is the unit circle in $\mathbb{C}$.
As $H$ acts on the unit disc surrounded by $\gamma$ discontinuously, 
we have a fundamental domain $D$ which contains $0$.
First we claim that there is a constant $C>0$ which depends only on $M,H$ and $D$ with the following property.
Let $\delta$ be a circle in $\Lambda(\Gamma)$ meeting $D$, we let $l$ denote the length of the ortho geodesic connecting $\delta$ and $\gamma$.
Let us show that the radius $r$ of $\delta$ satisfies 
\begin{eqnarray}\label{eq.ortho v.s. radius}
\frac{1}{Ce^{l}}\leq r \leq \frac{1}{e^{l}}.
\end{eqnarray}
The ortho geodesic connecting $\delta$ and $\gamma$ lies on a totally geodesic hyperplane intersecting with $\delta$ and $\gamma$.
Hence it suffices to discuss the case of dimension two, and we now suppose $\gamma=\{-1,1\}\subset\partial\mathbb{H}^{2}$.
The convex hull $C(\gamma)\subset\mathbb{H}^{2}$ of $\gamma$ is the half circle of radius $1$ centred at $0$.
Let $\delta'\subset\mathbb{H}^{2}$ denote the half circle centred at $0$ with hyperbolic distance $l$ from $C(\gamma)$.
The radius $r'$ of $\delta'$ equals $1/e^{l}$.
Since the $l$-neighbourhood (in hyperbolic metric) of the geodesic $\gamma$ becomes thickest (in terms of Euclidean metric) 
at the top, among geodesics of distance $l$ apart from $\gamma$,
$\delta'$ has the largest possible radius.
Hence we have $r\leq 1/e^{l}$.
We now consider an isometry 
\(
  A = \left(
    \begin{array}{rr}
      s & t \\
      u & v 
    \end{array}
  \right)
 \in \mathrm{PSL}(2,\mathbb{R})
\)
which preserves $C(\gamma)$ and maps $\delta'$ to $\delta$.
Without loss of generality, we may suppose $s>0$.
Since $A$ preserves $-1$ and $1$, we have $s = v$ and $t=u$, and hence, $s^{2}-t^{2} = 1$.
Then the centre of $\delta$ is $t/s$.
Note that since $M$ is compact and there is a lower bound of the length of ortho geodesics,
 there exists $1>\epsilon>0$ which depends only on $M, H$ and $D$ such that $\partial \delta\subset(-1+\epsilon,1-\epsilon)$
whenever $\delta$ meets $D$.
A simple calculation shows that the radius $r$ of $\delta$ equals $$\frac{r'}{s(tr'+s)}.$$
Since $t/s\leq 1-\epsilon$ and $s^{2}-t^{2} = 1$, we have $t\leq (1-\epsilon)s$, and $$s^{2}\leq \frac{1}{1-(1-\epsilon)^{2}}.$$
Therefore as $r'\leq 1-\epsilon$, and $r' = 1/e^{l}$, we have
$$r = \frac{1/e^{l}}{s(tr'+s)}\geq \frac{1/e^{l}}{s^{2}((1-\epsilon)r'+1)}\geq \frac{1-(1-\epsilon)^{2}}{1+(1-\epsilon)^{2}}\cdot \frac{1}{e^{l}},$$
and the claim follows.

Now we prove $e_{\mathcal{O}}\leq d$.
By Theorem \ref{thm.Parker}, we have $e_{\Lambda(\Gamma)} = d$.
For a subset $T\subset\partial \mathbb{H}^{3}$, 
let $$e_{T}:=\inf\{t:\sum_\text{$r$: radius of circles meeting $T$} r^{t}<\infty\}.$$
Then we have $e_{T}\leq e_{\Lambda(\Gamma)} = d$ for any $T$, in particular when $T=D$.
Let $B\subset \partial M$ be the component corresponding to $H$, and 
$\mathcal{O}_B$ denote the set of lengths of ortho geodesics of $M$ one of whose endpoints lies on $B$.
Then each element in $\mathcal{O}_B$ corresponds to an ortho geodesic which has a lift in $\mathbb{H}^3$ so that 
it connects $\gamma$ and a circle meeting $D$.
As there are only finitely many components in $\partial M$, we may suppose 
$$e_{B}:=\inf\{t:\sum_{l\in \mathcal{O}_B}\frac{1}{e^{tl}}<\infty\}$$
equals $e_\mathcal{O}$.
Since the constant multiplicable error in inequality (\ref{eq.ortho v.s. radius}) does not change the critical exponents,
we have $e_B = e_{D}$ and hence $e_D = e_\mathcal{O}$.
Hence we have $e_{\mathcal{O}}\leq d$.

For the converse inequality, we use the work of Larman \cite{Lar} which says that for any compact subset $K\subset\mathbb{C}$
and a circle packing $\mathcal{C}$ of $K$, 
the circle packing exponent of $\mathcal{C}$ is greater than or equal to the Hausdorff dimension of $\mathcal{C}$.
We first take large enough ball $K'$ centred at $0$ and contained in $\gamma$.
Then we let $K$ to be the subspace obtained by removing all circles intersecting with $\partial K'$ from $K'$.
By taking $K'$ large enough we may suppose that $K$ contains all the circles meeting $D$.
As $D$ is a fundamental domain and $K$ is compact, we may cover $K$ by finitely many translates $\gamma_i D$ of $D$ 
where $\gamma_i\in\Gamma$.
By the same argument as above we have $e_{\gamma_i D} = e_\mathcal{O}$ for every $i$, which implies 
$$e_{\cup_i \gamma_i D} = e_\mathcal{O}.$$
Hence we have $e_\mathcal{O} = e_{D} = e_{\cup_i \gamma_i D} = e_{K}$, 
and the Hausdorff dimension of the restriction of $\Lambda(\Gamma)$ on $K$ equals $d$.
Therefore we have $e_{\mathcal{O}} = e_{K}\geq d$ by Larman.
Thus $e_{\mathcal{O}} = d$ follows.

%
%
%
%

\qed

\begin{bibdiv}
\begin{biblist}

\bib{Ahl}{article}{
  title={Finitely generated Kleinian groups},
  author={Ahlfors, Lars V},
  journal={American Journal of Mathematics},
  volume={86},
  number={2},
  pages={413--429},
  year={1964},
  publisher={JSTOR}
}

\bib{Bas}{article}{
   author={Basmajian, Ara},
   title={The orthogonal spectrum of a hyperbolic manifold},
   journal={Amer. J. Math.},
   volume={115},
   date={1993},
   number={5},
   pages={1139--1159},
   issn={0002-9327},
}

\bib{Beardon}{book}{
   author={Beardon, Alan F.},
   title={The geometry of discrete groups},
   series={Graduate Texts in Mathematics},
   volume={91},
   note={Corrected reprint of the 1983 original},
   publisher={Springer-Verlag, New York},
   date={1995},
   pages={xii+337},
   isbn={0-387-90788-2},
}

\bib{Bridgeman}{article}{
   author={Bridgeman, Martin},
   title={Orthospectra of geodesic laminations and dilogarithm identities on
   moduli space},
   journal={Geom. Topol.},
   volume={15},
   date={2011},
   number={2},
   pages={707--733},
   issn={1465-3060},
}

\bib{Bridgeman-Dumas}{article}{
   author={Bridgeman, Martin},
   author={Dumas, David},
   title={Distribution of intersection lengths of a random geodesic with a
   geodesic lamination},
   journal={Ergodic Theory Dynam. Systems},
   volume={27},
   date={2007},
   number={4},
   pages={1055--1072},
   issn={0143-3857},
}

\bib{BK}{article}{
   author={Bridgeman, Martin},
   author={Kahn, Jeremy},
   title={Hyperbolic volume of manifolds with geodesic boundary and
   orthospectra},
   journal={Geom. Funct. Anal.},
   volume={20},
   date={2010},
   number={5},
   pages={1210--1230},
   issn={1016-443X},
}

\bib{Buser}{book}{
   author={Buser, Peter},
   title={Geometry and spectra of compact Riemann surfaces},
   series={Modern Birkh\"{a}user Classics},
   note={Reprint of the 1992 edition},
   publisher={Birkh\"{a}user Boston, Inc., Boston, MA},
   date={2010},
   pages={xvi+454},
   isbn={978-0-8176-4991-3},
}

\bib{BS}{article}{
   author={Buser, P.},
   author={Semmler, K.-D.},
   title={The geometry and spectrum of the one-holed torus},
   journal={Comment. Math. Helv.},
   volume={63},
   date={1988},
   number={2},
   pages={259--274},
   issn={0010-2571},
}

\bib{Cal}{article}{
   author={Calegari, Danny},
   title={Chimneys, leopard spots and the identities of Basmajian and
   Bridgeman},
   journal={Algebr. Geom. Topol.},
   volume={10},
   date={2010},
   number={3},
   pages={1857--1863},
   issn={1472-2747},
}

\bib{Gordon}{article}{
   author={Gordon, Carolyn S.},
   title={When you can't hear the shape of a manifold},
   note={With an appendix by Dennis DeTurck},
   journal={Math. Intelligencer},
   volume={11},
   date={1989},
   number={3},
   pages={39--47},
   issn={0343-6993},
   review={\MR{1007037}},
   doi={10.1007/BF03025190},
}

\bib{Hempel}{article}{
   author={Hempel, John},
   title={Traces, lengths, and simplicity for loops on surfaces},
   journal={Topology Appl.},
   volume={18},
   date={1984},
   number={2-3},
   pages={153--161},
   issn={0166-8641},
   review={\MR{769288}},
   doi={10.1016/0166-8641(84)90007-5},
}

\bib{Ker}{article}{
   author={Kerckhoff, Steven P.},
   title={The Nielsen realization problem},
   journal={Ann. of Math. (2)},
   volume={117},
   date={1983},
   number={2},
   pages={235--265},
   issn={0003-486X},
   review={\MR{690845}},
   doi={10.2307/2007076},
}

\bib{MMc}{article}{
   author={Masai, Hidetoshi},
   author={McShane, Greg},
   title={Equidecomposability, volume formulae and orthospectra},
   journal={Algebr. Geom. Topol.},
   volume={13},
   date={2013},
   number={6},
   pages={3135--3152},
   issn={1472-2747},
}

 \bib{MMc2019}{article}{
    author={Masai, Hidetoshi},
   author={McShane, Greg},
   title={Isospectral configurations in Euclidean and Hyperbolic Geometry}
   journal={in preparation}
 }
 
 \bib{McKean}{article}{
   author={McKean, H. P.},
   title={Selberg's trace formula as applied to a compact Riemann surface},
   journal={Comm. Pure Appl. Math.},
   volume={25},
   date={1972},
   pages={225--246},
   issn={0010-3640},
}

\bib{Lar}{article}{
   author={Larman, D. G.},
   title={On the exponent of convergence of a packing of spheres},
   journal={Mathematika},
   volume={13},
   date={1966},
   pages={57--59},
   issn={0025-5793},
}

\bib{Parlier}{article}{
   author={Parlier, Hugo},
   title={Interrogating surface length spectra and quantifying
   isospectrality},
   journal={Math. Ann.},
   volume={370},
   date={2018},
   number={3-4},
   pages={1759--1787},
   issn={0025-5831},
}

\bib{Par}{article}{
   author={Parker, John R.},
   title={Kleinian circle packings},
   journal={Topology},
   volume={34},
   date={1995},
   number={3},
   pages={489--496},
   issn={0040-9383},
}

\bib{Sunada}{article}{
   author={Sunada, Toshikazu},
   title={Riemannian coverings and isospectral manifolds},
   journal={Ann. of Math. (2)},
   volume={121},
   date={1985},
   number={1},
   pages={169--186},
   issn={0003-486X},
   review={\MR{782558}},
   doi={10.2307/1971195},
}

\bib{Wolpert}{article}{
   author={Wolpert, Scott},
   title={The eigenvalue spectrum as moduli for compact Riemann surfaces},
   journal={Bull. Amer. Math. Soc.},
   volume={83},
   date={1977},
   number={6},
   pages={1306--1308},
   issn={0002-9904},
}
\end{biblist}
 \end{bibdiv}

\end{document}